\numberwithin{equation}{section}
\newtheorem{Theorem}{Theorem}[section]
\newtheorem{Lemma}[Theorem]{Lemma}
\newtheorem{Remark}[Theorem]{Remark}
\def\dz{\,dz}
\def\dx{\,dx}
\def\dt{\,dt}
\def\ds{\,ds}
\DeclareMathOperator{\divergence}{div}
\def\mean#1{\mathchoice%
         {\mathop{\kern 0.2em\vrule width 0.6em height 0.69678ex depth -0.58065ex
                 \kern -0.8em \intop}\nolimits_{\kern -0.4em#1}}%
         {\mathop{\kern 0.1em\vrule width 0.5em height 0.69678ex depth -0.60387ex
                 \kern -0.6em \intop}\nolimits_{#1}}%
         {\mathop{\kern 0.1em\vrule width 0.5em height 0.69678ex
             depth -0.60387ex
                 \kern -0.6em \intop}\nolimits_{#1}}%
         {\mathop{\kern 0.1em\vrule width 0.5em height 0.69678ex depth -0.60387ex
                 \kern -0.6em \intop}\nolimits_{#1}}}
\newcommand{\aveint}[2]{\mathchoice%
          {\mathop{\kern 0.2em\vrule width 0.6em height 0.69678ex depth -0.58065ex
                  \kern -0.8em \intop}\nolimits_{\kern -0.45em#1}^{#2}}%
          {\mathop{\kern 0.1em\vrule width 0.5em height 0.69678ex depth -0.60387ex
                  \kern -0.6em \intop}\nolimits_{#1}^{#2}}%
          {\mathop{\kern 0.1em\vrule width 0.5em height 0.69678ex depth -0.60387ex
                  \kern -0.6em \intop}\nolimits_{#1}^{#2}}%
          {\mathop{\kern 0.1em\vrule width 0.5em height 0.69678ex depth -0.60387ex
                  \kern -0.6em \intop}\nolimits_{#1}^{#2}}}
\newcommand{\loc}{\textnormal{loc}}
\newcommand{\R}{{\mathbb R}}
\newcommand{\N}{{\mathbb N}}
\newcommand{\Ph}{\mathcal{P}}
\newcommand{\vs}{\vspace{3mm}}
\newcommand{\LWp}{L^p(0,T;W^{1,p}(\Omega))}
\newcommand{\CL}{C([0,T];L^2(\Omega))}
\newcommand{\LWpz}{L^p(0,T;W^{1,p}_0(\Omega))}
\newcommand{\Ldual}{L^{p'}(0,T;W^{-1,p'}(\Omega))}
\newcommand{\OT}{{\Omega_T}}
\begin{document}

\title{Lorentz estimates for obstacle parabolic problems}

\vspace{5mm}

\author{Paolo Baroni}

\address{Paolo Baroni, Department of Mathematics, Uppsala University, L\"agerhyddsv\"agen 1, SE-751 06, Uppsala, Sweden} \email{paolo.baroni@math.uu.se}


\date{\scriptsize \today}

\begin{abstract}
We prove that the spatial gradient of (variational) solutions to parabolic obstacle problems of $p$-Laplacian type enjoys the same regularity of the data and of the derivatives of the obstacle in the scale of Lorentz spaces.
\end{abstract}

\maketitle

\section{Introduction}
In this paper we deal with the {\em obstacle problem} related to the parabolic Cauchy-Dirichlet problem
\begin{equation}\label{eq.parabolic}
\begin{cases}
u_t-\divergence a(x,t,Du)=f-\divergence\big[{|F|}^{p-2}F\big]\ &\text{in $\Omega_T=\Omega\times(0,T),$}\\[3pt]
u= 0\qquad&\text{on $\partial_{\rm lat}\Omega_T=\partial\Omega\times(0,T)$,}\\[3pt]
x(\cdot,0)=u_0&\text{in $\Omega$,}
\end{cases}
\end{equation}
where the vector field models the $p$-Laplacian operator with coefficients
\begin{equation}\label{plpal}
a(x,t,Du)\approx b(x,t)\big(s^2+|Du|^2\big)^{\frac{p-2}{2}}Du,\qquad p> \frac{2n}{n+2},\quad s\in[0,1], 
\end{equation}
see \eqref{assumptions}, and where the obstacle $\psi$ is not continuous, as often considered in the literature. We are interested in {\em sharp integrability estimates} for the gradient $Du$ of solutions to the {\em variational inequality} related to \eqref{eq.parabolic} in terms of integrability of the data on the right-hand side $f,F$ and of the obstacle $\psi$  in the {\em scale of Lorentz spaces}; here $\Omega\subset\R^n$, $n\geq2$ is a bounded domain and it will be so for the rest of the paper. More precisely, given an obstacle function $\psi:\Omega\times[0,T]\to\R$,
\begin{equation}\label{int.ostacolo}
\psi\in \LWp\cap\CL 
\end{equation}
such that 
\begin{equation}\label{reg.ostacolo}
\partial_t\psi\in L^{p'}(\Omega_T)\qquad\text{and}\qquad \psi\leq0\quad\text{a.e. on $\partial_{\rm lat}\Omega_T$}
\end{equation}
and functions
\begin{equation}\label{int.data}
F\in L^p(\Omega_T;\R^n)\qquad\text{and}\qquad f\in L^{p'}(\Omega_T)
\end{equation}
(with $p'$ we denote the H\"older conjugate of $p$, i.e., $p':=p/(p-1)$ for $p>1$), we consider functions $u\in K_0$, where 
\[
K_0:=\big\{u\in \LWpz\cap \CL:u\geq\psi\text{ a.e. in }\Omega_T\big\},
\]
satisfying the variational inequality
\begin{align}\label{inequality}
&\int_0^T\langle\partial_t v,v-u\rangle_{W^{-1,p}\times W^{1,p}_0}\dt+\int_{\Omega_T}\langle a(x,t,Du),Dv-Du\rangle\dz\notag\\
&\geq-\frac12\int_{\Omega}|v(\cdot,0)-u_0|^2\dx+\int_{\Omega_T}\langle |F|^{p-2}F,Dv-Du\rangle\dz\notag\\
&\hspace{7cm}+\int_{\Omega_T}f(v-u)\dz
\end{align}
for any function $v\in K_0'$, with
\[
K_0':=\big\{v\in K_0:\partial_tv\in \Ldual\};
\]
$\langle\cdot,\cdot\rangle_{W^{-1,p}\times W^{1,p}_0}$ denotes the duality pairing crochet between $W^{1,p}_0(\Omega)$ and its dual space $W^{-1,p}(\Omega)$, while $\langle\cdot,\cdot\rangle$ is the scalar product in $\R^n$. We immediately mention that existence and uniqueness for the problem we are considering can be inferred from \cite[Theorem 6.1]{BDM2}. For the initial value we shall assume
\begin{equation}\label{initial.datum}
u_0\in W_0^{1,p}(\Omega)\qquad\text{and}\qquad u_0\geq \psi(\cdot,0)\quad\text{a.e. in $\Omega$}; 
\end{equation}
using an approximation scheme, we can also allow for initial data in $u_0\in L^2(\Omega)$. The vector fields we treat model the $p$-Laplacian operator in the following sense: we take $a:\Omega_T\times\R^n\to\R^n$ such that $\partial_\xi a$ is a Carath\'eodory function and such that the following ellipticity and growth conditions are satisfied:
\begin{equation}\label{assumptions}
\begin{cases}
	\langle\partial_\xi a(x,t,\xi)\lambda,\lambda\rangle\geq\nu\big(s^2+|\xi|^2)^{\frac{p-2}{2}}|\lambda|^2,\\[5pt]
	|a(x,t,\xi)|+|\partial_\xi a(x,t,\xi)|\big(s^2+|\xi|^2\big)^{\frac12} \leq L\big(s^2+|\xi|^2\big)^{\frac{p-1}{2}},
 \end{cases}
\end{equation}
for almost every $(x,t)\in\Omega_T$ and all $\xi,\xi_1,\xi_2,\lambda\in\R^n$; the structural constants satisfy $0<\nu\leq 1\leq L<\infty$, $s\in[0,1]$ is the degeneracy parameter and the exponent $p$ will always satisfy the lower bound $p>\frac{2n}{n+2}$ as in \eqref{plpal}. Moreover we shall consider the following nonlinear VMO condition in the spirit of \cite{BW, KuusiMingione:2011}: defining for balls $ B\subset\Omega$ and for all $t\in (0,T)$ and all $\xi\in\R^n$ the averaged vector field
\begin{equation}\label{av.vf}
(a)_{B}(t,\xi):=\mean{B}a(\cdot,t,\xi)\dx, 
\end{equation}
we require the averaged, normalized modulus of oscillation $\omega_a(R)\in[0,2L]$
\begin{equation}\label{mod.osc}
\omega_a(R):=\sup_{\substack{t\in(0,T),\\B\in \mathcal B_R, \xi\in\R^n}}\biggl(\mean{B}\biggl(\frac{|a(y,t,\xi)-(a)_{B}(t,\xi)|}{{(s^2+|\xi|^2)}^{(p-1)/2}}\biggr)^2\,dy\biggr)^{\frac12}
\end{equation}
where $\mathcal B_R$ is the collection of balls $\{B\equiv B_r(x)\subset\Omega:0<r\leq R\}$, to satisfy
\begin{equation}\label{ass.omega}
\lim_{R\searrow0}\omega_a(R)=0. 
\end{equation}
This means that, if we consider the model case in \eqref{plpal} with product coefficients $b(x,t)=d(x)h(t)$, we can allow bounded and measurable time-coefficients ($h\in L^\infty(0,T)$) and bounded and VMO spatial ones ($d\in (L^\infty\cap VMO)(\Omega)$); this kind on ``non-linear VMO condition'' includes, as particular case, the regularity conditions we assumed in \cite{BL} for systems
. VMO regularity  {\em only with respect to the spatial variables} has been often assumed to prove regularity estimates of this kind, starting from \cite{KR, KZ}, in the case without obstacle; see also \cite{AM07, BW}. 

\vs

Finally we are in position to state the main result of our paper:
\begin{Theorem}\label{thm.main}
 Let $u\in K_0$ satisfy the variational inequality \eqref{inequality}, where the vector field $a$ satisfies \eqref{assumptions} and \eqref{ass.omega}; moreover suppose that
 \begin{equation}\label{def-Psi}
|D\psi|+|\partial_t\psi|^{1/(p-1)}+|F|+|f|^{1/(p-1)}\in L(\gamma,q)\quad\text{locally in $\Omega_T$}
\end{equation}
for some $\gamma>p$ and some $q\in(0,\infty]$. Then $|Du|\in L(\gamma,q)$ locally in $\Omega_T$ and there exists a radius $R_0\leq 1$, depending on $n,p,\nu,L,\omega_a(\cdot),\gamma$ and on $q$ in the case $q<\infty$, such that the following local estimate holds, for parabolic cylinders $Q_{2R}\equiv Q_{2R}(z_0)\subset\Omega_T$, with $R\leq R_0$:
\begin{multline}\label{main.est}
|Q_R|^{-\frac1\gamma}\big\||Du|+s\big\|_{L(\gamma,q)(Q_R)}\leq c\,\biggl(\mean{Q_{2R}}\big(|Du|+s\big)^p\dz\biggr)^{\frac dp}\\+c\,|Q_{2R}|^{-\frac d\gamma}{\|\Psi_{2R}+1\|}_{L(\gamma,q)(Q_{2R})}^d,
\end{multline}
where the function $\Psi_R$, belonging locally to $L(\gamma,q)(\Omega_T)$, is defined by
\begin{equation}\label{Psi}
\Psi_R:=|F|+|D\psi|+R^{1/(p-1)}\big(|f|^{1/(p-1)}+|\partial_t\psi|^{1/(p-1)}\big). 
\end{equation}
The constant in \eqref{main.est} depends on $n,p,\nu,L,\gamma,q$ (except in the case $q=\infty$, where it depends only on $n,p,\nu,L,\gamma$) and the scaling deficit $d\geq1$ is defined by
 \begin{equation}\label{def-d}
d\equiv d(p):=
\begin{cases}
\displaystyle{\frac{p}{2}}&\text{if $\quad p\geq2$,}\\[10pt]
\displaystyle{\frac{2p}{p(n+2)-2n}}\quad&\text{if $\quad\displaystyle{\frac{2n}{n+2}<p<2}$.}
\end{cases}
\end{equation}
Note that the constant $c$ depends critically on $\gamma-p$, in the sense that $c\to\infty$ when $\gamma\to p$.
\end{Theorem}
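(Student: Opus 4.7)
The plan is to establish \eqref{main.est} via the now-classical Calder\'on--Zygmund iteration adapted to the parabolic $p$-Laplace obstacle setting, resting on intrinsic scaling, a chain of comparison estimates, and a good-$\lambda$ argument integrated on the Lorentz scale.

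First I would pass to the intrinsic parabolic cylinders $Q^\lambda_R(z_0) := B_R(x_0)\times(t_0-\lambda^{2-p}R^2,t_0+\lambda^{2-p}R^2)$, with the parameter $\lambda$ tuned so that $\mean{Q^\lambda_R}(|Du|+s)^p\dz \simeq \lambda^p$. This intrinsic geometry neutralizes the non-homogeneity of the $p$-Laplacian and, coupled with an obstacle-adapted Caccioppoli inequality, yields higher integrability / reverse H\"older estimates for $|Du|$ on such cylinders. The analytic heart is then a chain of three comparison estimates. On $Q\equiv Q^\lambda_R$, let $v$ solve the obstacle problem for the same vector field $a$ but with $F,f,\partial_t\psi$ suppressed and with lateral datum $u$; monotonicity testing in \eqref{inequality} with $v-u$, made admissible through a Steklov-type time mollification, produces
\begin{equation*}
\mean{Q}|Du-Dv|^p\dz \leq c\mean{Q}\bigl(|F|^p+R^{p'}|f|^{p'}+R^{p'}|\partial_t\psi|^{p'}\bigr)\dz.
\end{equation*}
Next, the Lewy--Stampacchia inequality identifies the obstacle reaction for $v$ as an integrable right-hand side dominated pointwise by $|\divergence a(\cdot,\cdot,D\psi)|$, so comparison with the solution $w$ of the unconstrained Cauchy--Dirichlet problem with lateral datum $v$ yields an error controlled by $|D\psi|^p$. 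Finally, freezing the $x$-dependence via the average $(a)_B$ and exploiting \eqref{ass.omega}, a third comparison with the homogeneous frozen-coefficient solution $\widetilde w$ loses only a positive power of $\omega_a(R)$, which can be made arbitrarily small by taking $R\leq R_0$. Standard Lipschitz regularity for homogeneous parabolic $p$-Laplace equations with VMO-in-space coefficients then gives $\|D\widetilde w\|_{L^\infty(Q^\lambda_{R/2})}\leq c\lambda$.

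Chaining the three comparisons on an intrinsic cylinder selected through an exit-time argument applied to the parabolic maximal function $M^*$ of $(|Du|+s)^p$ produces the density / good-$\lambda$ inequality
\begin{equation*}
\bigl|\{M^*(|Du|+s)^p > K\lambda^p\}\cap Q_R\bigr| \leq \varepsilon\,\bigl|\{M^*(|Du|+s)^p > \lambda^p\}\cap Q_R\bigr| + \bigl|\{M^*(\Psi_{2R}+1)^p > \delta\lambda^p\}\cap Q_R\bigr|,
\end{equation*}
valid for $\lambda$ above a threshold dictated by $\bigl(\mean{Q_{2R}}(|Du|+s)^p\dz\bigr)^{d/p}$. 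Raising to the power $q/\gamma$, multiplying by $\lambda^{q-1}$, integrating from the threshold to infinity, and invoking the Lorentz representation
\begin{equation*}
\|g\|_{L(\gamma,q)}^q \simeq \int_0^\infty \bigl(\lambda^\gamma|\{|g|>\lambda\}|\bigr)^{q/\gamma}\,\frac{d\lambda}{\lambda},
\end{equation*}
gives \eqref{main.est} after undoing the intrinsic scaling. The smallness $\varepsilon=\varepsilon(\gamma,q)$ required to absorb the first term on the right is exactly what forces $c\to\infty$ as $\gamma\searrow p$, and the scaling deficit $d$ of \eqref{def-d} emerges when translating intrinsic bounds back into standard parabolic cylinders.

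The principal obstacle is the simultaneous presence of the (non-linear) obstacle constraint, the non-quadratic growth, and the merely VMO-in-space coefficient. Since $K_0$ is not a vector space, the comparison functions must be constructed as $\max(\cdot,\psi)$-type truncations that remain admissible in \eqref{inequality}, and the Steklov regularisation needed to justify testing against $\partial_t u$ must be carried out consistently with this constraint. Both the Lewy--Stampacchia reduction and the energy estimates become markedly more delicate in the singular range $\frac{2n}{n+2}<p<2$, where the intrinsic geometry and scaling deficit $d>1$ interact non-trivially and ultimately govern the exponents on the right-hand side of \eqref{main.est}.
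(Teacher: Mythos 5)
Your comparison chain and your final good-$\lambda$ mechanism both depart substantively from the paper's argument, and in both places the departure opens a genuine gap.

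On the comparison chain: the paper does not invoke Lewy--Stampacchia. Instead it compares $u$ directly to the solution $v$ of the \emph{non-homogeneous} Cauchy--Dirichlet problem
\[
\partial_t v - \divergence a(x,t,Dv) = \partial_t\psi - \divergence a(x,t,D\psi)
\quad\text{in } Q^\lambda_{40r_{\bar z}}(\bar z), \qquad v=u \ \text{on } \partial_\Ph Q^\lambda_{40r_{\bar z}}(\bar z),
\]
and then the comparison principle of Lemma~\ref{comparison} automatically gives $v\geq\psi$ (since $v=u\geq\psi$ on the parabolic boundary and the right-hand side matches the ``PDE on $\psi$''), so $v$ is admissible in the variational inequality without any reaction-measure analysis. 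Your route — let $v$ solve the obstacle problem with $F,f,\partial_t\psi$ suppressed, then identify the obstacle reaction via Lewy--Stampacchia — is not equivalent and needs justification under the weak hypotheses \eqref{int.ostacolo}--\eqref{reg.ostacolo}: a parabolic Lewy--Stampacchia inequality with $\partial_t\psi$ merely in $L^{p'}$ and in the singular range $\tfrac{2n}{n+2}<p<2$ is not something you can cite off the shelf, and you would have to establish it before the second comparison makes sense. The paper's device sidesteps this entirely.

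On the good-$\lambda$ step: you route the argument through a parabolic maximal function $M^*$. The introduction of the paper explains explicitly why this is ruled out for $p\neq2$: the comparison estimates live only on \emph{intrinsic} cylinders $Q^\lambda_\varrho$ whose shape depends on $\lambda$ (and hence on the solution) through the exit-time relation $\mathrm{CZ}(Q^\lambda_{r_{\bar z}}(\bar z))=\lambda$, so there is no fixed, universal family of cylinders over which a maximal operator can be defined or bounded. Translating the intrinsic comparison bounds into a level-set inequality for $M^*$ as you wrote it would require re-deriving uniform estimates over a non-intrinsic family, which is precisely what fails off the quadratic case. The Acerbi--Mingione technique, which the paper follows, works instead with a stopping-radius selection and a Vitali covering applied directly to the super-level sets $E(\lambda,Q_{r_1})$ of $|Du|+s$, together with the density estimate \eqref{discuss}; no maximal function appears, and the smallness that closes the iteration comes from choosing the weight $M$ large and the radius $R_0$ small so that $G(2R,M)=M^{1-p}+[\omega_a(2R)]^{\bar\varepsilon}$ is small, not from an $\varepsilon$ in a maximal-function good-$\lambda$ lemma.

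Two further omissions worth flagging. First, the passage from the level-set inequality to the Lorentz quasi-norm is where the $1/(\gamma-p)$ blow-up actually comes from, and it requires the Hardy inequality (Lemma~\ref{Hardy}) when $q\geq\gamma$, the reverse-H\"older lemma (Lemma~\ref{Stein}) when $q<\gamma$, and a separate supremum argument when $q=\infty$; your sketch elides all three. Second, the re-absorption of the $L(\gamma,q)$-norm of $|Du|$ on the right-hand side is circular unless you first work with the truncations $|Du|_k=\min\{|Du|,k\}$ (so that the quantity being absorbed is a priori finite) and then pass to the limit using lower semicontinuity of the Lorentz quasi-norm; without this step the iteration lemma cannot be applied.
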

We recall that the Lorentz space $L(\gamma,q)(A)$, for $A\subset \R^k$, $k\in\N$, open set and for parameters $1\leq\gamma<\infty$ and $0<q<\infty$, is defined by requiring, for a measurable function $g:A\to\R$, that
\begin{equation}\label{def.Lorentz}
     \| g\|_{L(\gamma,q)(A)}^q := q \int_0^\infty \Big( \lambda^\gamma \big| \{\xi\in A:\,|g(\xi)| > \lambda\} \big| \Big)^\frac q\gamma \frac{d\lambda}{\lambda}  <\infty.
\end{equation}
If $q=\infty$, $1\leq\gamma<\infty$, the space $L(\gamma,\infty)(A)$ is by definition the Marcinkiewicz space $\mathcal M^\gamma(A)$, the space of measurable functions $g$ such that
\begin{equation}\label{def.Marcin}
     \| g\|_{L(\gamma,\infty)(A)}=\| g\|_{\mathcal M^\gamma(A)} := \sup_{\lambda>0}\Big( \lambda^{\gamma} \big| \{\xi\in A:\,|g(\xi)| > \lambda\} \big| \Big)^{\frac1\gamma}<\infty. 
\end{equation}
The local variant of such spaces is defined in the usual way; see Paragraph \ref{Lorentz} for some more details about Lorentz spaces. 

\vs

A few comments about our assumptions and our result. To start with, note that the lower bound for the exponent, analogous to that in \eqref{plpal}-\eqref{plap}, is unavoidable since it already naturally appears in the regularity theory of solutions to parabolic $p$-Laplacian operators (see \cite{DiBenedetto, Urbano, AM07, KMIbero, BOK}). 

Note also that the result is sharp, and this follows if we consider the regularity of solutions on the so-called coincidence set, i.e. that portion of the domain where the solutions and the obstacle coincide; if we consider {\em solutions} to \eqref{eq.parabolic}, the implication
\[
|F|\in L(\gamma,q)\quad\text{locally in $\Omega_T$}\quad\Longrightarrow\quad |Du|\in L(\gamma,q)\quad\text{locally in $\Omega_T$} 
\]
has been proved by the author in \cite{BL}. Our work essentially relies upon the work \cite{AM07} of Acerbi and Mingione, where the Lebesgue version $q=\gamma$ of Theorem \ref{thm.main} {\em without obstacle} has been proved:
\begin{equation}\label{eq.am07}
|F|\in L^\gamma_\loc(\Omega_T)\quad\Longrightarrow\quad |Du|\in L^\gamma_\loc(\Omega_T) 
\end{equation}
for $\gamma>p$. In this paper techniques to handle Calder\'on-Zygmund estimate for degenerate and singular parabolic systems of $p$-Laplacian type have been developed for the first time; see also \cite{BOK} for a version up to the boundary. These techniques have then been used extensively in the last year, for instance to obtain global estimates in domains with rough boundaries \cite{BOK, BW}; these tools (which we shall describe in a while) have also been shown to be flexible enough to handle parabolic (and elliptic) obstacle problem. We refer in particular to \cite{BDM2}, where the analogue of our Theorem \ref{thm.main} has been proved in the setting of Lebesgue spaces:
\begin{multline*}
|D\psi|+|\partial_t\psi|^{1/(p-1)}+|F|+|f|^{1/(p-1)}\in L^\gamma_\loc(\Omega_T)\\\Longrightarrow\qquad |Du|\in L^\gamma_\loc(\Omega_T) 
\end{multline*}
for $\gamma>p$. On the other hand in \cite{BL} the author proved the natural generalization of \eqref{eq.am07} to the Lorentz spaces setting, see the same \cite{BL} for further comments and references. In this paper we show how to modify the technique which lead to \eqref{eq.am07} in order to meet both the obstacle-structure of the problem and the setting of Lorentz spaces; moreover, while using extensively some of the results proved in \cite{BDM2}, we shall simplify some of the arguments: in particular we will not prove \eqref{main.est} as an {\em a priori estimate} for solution with {\em bounded gradient} (this will be needed to re-absorb certain terms appearing on the right-hand side), but we shall argue directly on truncations of the gradient, see \eqref{notation.k} and \eqref{notation.k.est}. Finally, with regard to obstacle problems, we want to mention the recent interesting paper \cite{LP} by Lindqvist \& Parviainen, where it is discussed the topic of existence of solutions for {\em irregular obstacle} problems, in the sense that obstacles do not even possess time derivative; a delicate interaction between regularity of the obstacle and the regularity of the test functions comes here into play.

\vs

The approach developed in \cite{AM07}, with elements from \cite{CaPe, KL}, is essentially based on the construction of an appropriate {\em family of intrinsic cylinders}  where the equation re-homogenize: already when considering the homogeneous evolutionary $p$-Laplace equation
\begin{equation}\label{plap}
\partial_tu-\divergence \big[|Du|^{p-2}Du\big]=0, \qquad p> \frac{2n}{n+2},
\end{equation}
one has to work {\em not} with the standard parabolic cylinders $Q_R(x_0,t_0):=B_R(x_0)\times (t_0-R^2,t_0)$, but with certain cylinders whose shape is devised to rebalance the lack of scaling of the equation: indeed the elliptic part is homogeneous of degree $p-1$, while the parabolic part is clearly of degree $1$, and this tells us that {\em no universal family of balls is associated to the equation}. As a consequence, typical harmonic analysis tools like maximal operators are automatically ruled out. One, hence, following DiBenedetto \cite{DiBenedetto, DiBenedettoFried} and considering here for simplicity in the case $p \geq 2$, works on cylinders of the type 
\[
Q^\lambda_R(z_0)\equiv Q^\lambda_R(x_0,t_0):=B_R(x_0)\times (t_0-\lambda^{2-p}R^2,t_0) 
\]
with $\lambda\geq1$ a scaling parameter; the heuristic underneath the choice of the scaling parameter $\lambda$ is the following. Suppose that on one of these cylinders the relation
\begin{equation}\label{introinsic}
\mean{Q^\lambda_R(z_0)}|Du|^p\dz\approx\lambda^p
\end{equation}
holds; we call such a cylinder {\em intrinsic}, since the parameter $\lambda$ appears both in the definition of the cylinder and in the values $Du$ takes over it and therefore every of these cylinders {\em depends explicitly on the solution}. Relation \eqref{introinsic} roughly tells that  $|Du| \approx \lambda$ on $Q_R^\lambda(z_0)$ and hence one may think to equation \eqref{plap} as actually $\partial_tu -\lambda^{p-2}\,\divergence Du=\partial_tu -\lambda^{p-2}\,\triangle u = 0$ in $Q_R^\lambda(z_0)$. Now, switching from the intrinsic cylinder $Q^\lambda_R(z_0)$ to $Q_1$, that is making the change of variables 
 \[
 v(x,t):=u(x_0 +Rx,t_0 +\lambda^{2-p}R^2t),\quad	(x,t)\in B_1 \times(-1,0)\equiv Q_1,
 \]
 we note that our equation finally rewrites as $\partial_tv-\Delta v=0$ in $Q_1$. This argument tells that on an {\em intrinsic cylinder} like \eqref{introinsic} the solution $u$ behaves as a solution to the heat equation. Note however that the previous argument is clearly only heuristic, and its implementation is far from straightforward; in particular it clearly has to be adapted to the problem we are dealing with, taking into account also the eventual presence of right-hand sides or obstacles, as in our case. Indeed our choice of intrinsic cylinders will be, see \eqref{exit.cylinder},
\[
 {\biggl(\mean{Q_R^\lambda(z_0)}\big(|Du|+s\big)^p\dz\biggr)}^{\frac 1p}+M{\biggl(\mean{Q_{R}^\lambda(z_0)}(\Psi_R+s)^\eta\dz\biggr)}^{\frac 1\eta}=\lambda,
\]
for some $\eta\in(p,\gamma)$ and a large constant $M$. This latter constant is essentially {\em the key point} in the approach of Acerbi and Mingione: the {\em weight} $M\gg1$
is a suitably chosen parameter, depending on the structural constants of the problem, which allow to quantitatively control the contribution of the data $f,F,\psi$.
Indeed, we know that whether 
\[
\mean{Q_R^\lambda(z_0)} \big(|Du|+s\big)^{p} \dz\approx\lambda^p\quad\text{or}\quad\mean{Q_{R}^\lambda(z_0)}(\Psi_R+s)^\eta\dz\approx\frac{\lambda^\eta}{M^\eta}
\]
holds. Therefore, again heuristically, or the equation is the non-degenerate as above in $Q^\lambda_R(z_0)$ or, if we choose $M$ large, $u$ solves (approximately) the $p$-Laplacian type equation
\[
    \partial_t u-\divergence a(x,t,Du)\approx0\qquad\text{on $Q^\lambda_R(z_0)$,}
\]
with constant, negative obstacle; this, a bit more formally, will be formalized in two steps: first we compare our variational solution to the solution to the Cauchy-Dirichlet problem, where the right-hand side has small as we please $L^{p'}$ norm
\[
 \begin{cases}
 \partial_tv-\divergence a(x,t,Dv)= \partial_t\psi-\divergence a(x,t,D\psi)&\text{in $Q_R^\lambda(z_0)$,}\\[3pt]
v\equiv u&\text{on $\partial_\Ph Q_R^\lambda(z_0)$,}
\end{cases}
\]
(see \eqref{f.compa.V} for the resulting comparison inequality) and then we compare in turn $v$ with the solution of the homogeneous problem
\[
 \begin{cases}
 \partial_t\tilde v-\divergence a(x,t,D\tilde v)= 0&\text{in $Q_{R/2}^\lambda(z_0)$,}\\[3pt]
\tilde v\equiv v&\text{on $\partial_\Ph Q_{R/2}^\lambda(z_0)$;}
\end{cases}
\]
the result is in \eqref{s.compa.V}. See also the analogue but somehow different heuristic explanation in \cite{BDM2}.

\section{Notation, function spaces and tools}
Here first we fix the notation we are going to use in this paper; moreover we shall collect some definitions and results regarding functional spaces we shall employ but also classic results for $p$-Laplacian type equations.
\subsection{Notation} 
The Euclidean space $\R^{n+1}$ will always be thought as $\R^n\times\R$, so a point $z\in\R^{n+1}$ will be often also denoted as $(x,t)$, $z_0$ as $(x_0,t_0)$ and so on. Being $B_R(x_0)$ the ball $\{x\in\R^n:|x-x_0|<R\}$, we shall consider parabolic cylinders of the form
\[
Q_R(z_0):=B_R(x_0)\times (t_0-R^2,t_0),
\] 
but we shall also deal with scaled cylinders of the form
\begin{equation}\label{cylinders}
     Q_R^{\lambda}(z_0):=
\begin{cases}
B_R(x_0)\times \big(t_0-\lambda^{2-p}R^2,t_0)\quad&\text{if $p\geq2$},\\[3pt]
B_{\lambda^{\frac{p-2}{2}}R}(x_0)\times \big(t_0-R^2,t_0)\quad&\text{if $p<2$},
\end{cases} 
\end{equation}
where the stretching parameter {\em will be always greater than one}: $\lambda\geq1$; hence in both cases $Q_R^\lambda(z_0)\subset Q_R^1(z_0)=Q_R(z_0)$. With $\chi B_R(x_0)$, for a constant $\chi>1$, we will denote the $\chi$-times enlarged ball, i.e. $\chi B_R(x_0):=B_{\chi R}(x_0)$, and the same for cylinders: $\chi Q_R^{\lambda}(z_0):=Q_{\chi R}^{\lambda}(z_0)$. In order to shorten notation, we shall denote $\Lambda_R^{\lambda}(t_0)=:\big(t_0-\lambda^{2-p}R^2,t_0)$ and $B_R^\lambda(x_0):=B_{\lambda^{\frac{p-2}{2}}R}(x_0)$, and we shall drop the $\lambda$ when it will be one: $ \Lambda_R(t_0)=:\big(t_0-R^2,t_0)$ and $B_R^\lambda(x_0):=B_R(x_0)$. Often we shall avoid to make explicit their centers in the following way: $Q_R^{\lambda}\equiv Q_R^{\lambda}(z_0)$ and similar.

\vs

Given $\tau\in (0,T)$ we shall write $\Omega_\tau$ for the cylinder $\Omega\times(0,\tau)$; by parabolic boundary of $\mathcal K:=C\times I$ in $\R^{n+1}$, we mean $\partial_{\Ph}\mathcal K:=C\times\{\inf I\}\cup \partial C\times I$. Being $A \in \R^k$ a measurable set with positive measure and $f:A \to \R^m$ an integrable map,  with $k,m \ge 1$, we indicate with ${(f)}_A$ the averaged integral
\begin{equation*}
      {(f)}_{A}:=\mean{A} f(\xi)\, d\xi := \frac{1}{|A|} \int_{A} f(\xi)\, d\xi.
\end{equation*}
We will denote with $c$ a generic constant always greater than one, possibly varying from line to line; however, the ones we shall need to recall will be denoted with special symbols, such as $c_{DiB}, \tilde c,c_\ell$. We finally remark that by $\sup$ we shall always mean essential supremum.

\subsection{Lorentz spaces}\label{Lorentz}
The reader might recall the definition of Lorentz spaces in \eqref{def.Lorentz}-\eqref{def.Marcin}. Since here we assume $A$ of finite measure, the spaces $L(\gamma,q)(A)$ decrease in the first parameter $\gamma$; this means that for $1\le \gamma_1\leq \gamma_2<\infty$ and $0<q\le\infty$ we have a continuous embedding $L(\gamma_2,q)(A)\hookrightarrow L(\gamma_1,q)(A)$ with 
\[
\| g\|_{L(\gamma_1,q)(A)} \le|A|^{\frac{1}{\gamma_1}-\frac{1}{\gamma_2}}\| g\|_{L(\gamma_2,q)(A)}.
\]
On the other hand the Lorentz spaces in general increase in the second parameter $q$, i.e. we have for $0<q_1\leq q_2\le\infty$ the continuous embedding $L(\gamma,q_1)(A)\hookrightarrow L(\gamma,q_2)(A)$ with
\[
\| g\|_{L(\gamma,q_2)(A)} \le c(\gamma,q_1,q_2)\| g\|_{L(\gamma,q_1)(A)}
\]
when $q_2<\infty$, while the constant clearly does not depend on $q_2$ when $q_2=\infty$; see, essentially, Lemma \ref{Stein} for $\lambda=0$ and an appropriate choice of the quantities involved. Note moreover that by Fubini's theorem we have
\[
    \| g\|_{L^\gamma(A)}^\gamma=\gamma\int_0^\infty \lambda^\gamma\big|\{ \xi\in A: |g(\xi)|>\lambda\}\big|\,\frac{d\lambda}{\lambda} =\| g\|_{L(\gamma,\gamma)(A)}^\gamma\, ,
\]
so that $L^\gamma (A)=L(\gamma,\gamma)(A)$. Finally we have  that $L(\gamma,q)(A)\subset L^\eta(A)$ for any $\eta<\gamma$ and all $0<q\leq\infty$, see for instance Lemma \ref{lab.Marci} together with the second embedding above.

\begin{Remark}\label{lsc}
{\rm Note that the notation we use might be misleading, since, due to the lack of sub-additivity, the quantity $\|\cdot\|_{L(\gamma,q)(A)}$ is just a quasi-norm.
Nevertheless, the mapping $g\mapsto \|g\|_{L(\gamma,q)(A)}$ is lower semi-continuous with respect to a.e. convergence, see \cite[Remark 3]{MinMat} or \cite[Section 3]{JB}. }
\end{Remark}
  
\subsection{Parabolic spaces}
We collect here some properties of parabolic Sobolev spaces, restricting to the case $p>\frac{2n}{n+2}$. First notice that the embedding $W^{1,p}_0(\Omega)\hookrightarrow L^2(\Omega)$ for such exponents and the identification given by Riesz's Theorem allows to identify
\begin{equation}\label{identification}
\langle v,u\rangle_{W^{-1,p}\times W^{1,p}_0}=\int_{\Omega}vu\dx\qquad \text{if $v\in L^2(\Omega)\subset W^{-1,p'}(\Omega)$,}
\end{equation}
for any $u\in W^{1,p}_0(\Omega)$, where, we recall, $\langle\cdot,\cdot\rangle_{W^{-1,p}\times W^{1,p}_0}$ is the duality pairing between $W^{1,p}_0(\Omega)$ and $W^{-1,p}(\Omega)$. 

\vs

The space $\Ldual$ is the space of functions $f$ (usually we will have/denote $f=\partial_tg$ for some function $g\in L^p(0,T;W^{1,p}(\Omega))$) such that $f\in W^{-1,p'}(\Omega)$ for a.e. $t\in (0,T)$ and moreover
\[
t\mapsto \|f(\cdot,t)\|_{W^{-1,p'}(\Omega)}\in L^{p'}((0,T));
\]
moreover note that the following implication holds
\begin{multline}\label{inclusion}
f\in \LWp\quad\text{and}\quad\partial_tf\in \Ldual \\\qquad \Longrightarrow \qquad f\in C([0,T];L^2(\Omega)),
\end{multline}
see \cite{Lions}. This means that we made redundant assumptions (as in \eqref{int.ostacolo}-\eqref{reg.ostacolo}), but we shall keep doing that, for the sake of clarity. For the next result, which allows to manipulate the parabolic part of the variational inequality, see \cite{Sho} or \cite[Lemma 2.1]{BDM2}.
\begin{Lemma}
 Let $u,v\in\LWp$ be such that 
 \[
	\partial_tu,\partial_tv\in\Ldual. 
\]
Then
\begin{equation}\label{lemma.1}
\langle\partial_tu(\cdot,t),u(\cdot,t) \rangle_{W^{-1,p'}\times W^{1,p}_0}=\frac12\int_{\Omega}|u(\cdot,t)|^2\dx 
\end{equation}
for every $t\in[0,T]$ and moreover the following integration by parts formula holds true:
\begin{multline}\label{lemma.2}
\int_0^T\langle\partial_tu(\cdot,t),v(\cdot,t) \rangle_{W^{-1,p'}\times W^{1,p}_0}\dt= \int_{\Omega}uv(\cdot,\tau)\dx\biggr|_{\tau=0}^T\\-\int_0^T\langle\partial_tv(\cdot,t),u(\cdot,t) \rangle_{W^{-1,p'}\times W^{1,p}_0}\dt.
\end{multline}
\end{Lemma}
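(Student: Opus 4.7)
The plan is a standard density/approximation argument. The idea is to reduce both claims to the case where $u$ and $v$ are smooth in $t$, where they become elementary, and then to pass to the limit using the regularity already available. By \eqref{inclusion} both $u$ and $v$ lie in $C([0,T];L^2(\Omega))$, so every pointwise-in-time $L^2$ evaluation appearing on the right-hand sides is meaningful, while $\partial_t u, \partial_t v\in\Ldual$ ensures that the duality pairings on the left are well defined for a.e.\ $t\in(0,T)$.

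The concrete tool I would use is Steklov averaging in time: for small $h>0$ set $[w]_h(x,t):=\frac{1}{h}\int_{t}^{t+h}w(x,s)\ds$ on $\Omega\times(0,T-h)$, for $w\in\{u,v\}$. Standard Steklov calculus yields $[w]_h\in L^p(0,T-h;W^{1,p}_0(\Omega))$ with $[w]_h\to w$ in this space as $h\downarrow 0$, together with the explicit derivative $\partial_t[w]_h(\cdot,t)=h^{-1}\bigl(w(\cdot,t+h)-w(\cdot,t)\bigr)$, which lies in $L^2(\Omega)$ for each $t$ and converges to $\partial_t w$ in $\Ldual$. Since $\partial_t[w]_h$ is an honest $L^2$ function, the identification \eqref{identification} collapses the duality pairings to genuine $L^2$ inner products, so the mollified analogues of \eqref{lemma.1}--\eqref{lemma.2} reduce, respectively, to the pointwise chain rule $\int_\Omega \partial_t[u]_h\,[u]_h\dx=\tfrac12\tfrac{d}{dt}\int_\Omega|[u]_h|^2\dx$ and to the product-rule identity $\partial_t\bigl([u]_h[v]_h\bigr)=\partial_t[u]_h\,[v]_h+[u]_h\,\partial_t[v]_h$; integrating the latter in $t\in(0,T)$ immediately produces the integration-by-parts formula \eqref{lemma.2} for the mollified functions.

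Finally, I would let $h\downarrow 0$. On the left-hand sides, the pairings pass to the limit thanks to the strong convergences recorded above, $[w]_h\to w$ in $\LWp$ and $\partial_t[w]_h\to\partial_t w$ in $\Ldual$. The boundary contributions $\int_\Omega [u]_h[v]_h\dx$ at $t=0$ and $t=T$ converge to the desired $L^2$ inner products because $[w]_h\to w$ in $C([0,T];L^2(\Omega))$, a consequence of the continuity provided by \eqref{inclusion}. Identity \eqref{lemma.1} is then read through its integrated form: the argument above delivers $\int_0^\tau \langle\partial_t u,u\rangle_{W^{-1,p'}\times W^{1,p}_0}\dt=\tfrac12\|u(\tau)\|_{L^2(\Omega)}^2-\tfrac12\|u(0)\|_{L^2(\Omega)}^2$ for every $\tau\in[0,T]$, which says precisely that $t\mapsto\tfrac12\|u(t)\|_{L^2(\Omega)}^2$ is absolutely continuous with distributional derivative equal to the pairing $\langle\partial_t u,u\rangle$; this is the natural interpretation of \eqref{lemma.1}. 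The main point requiring care is the passage to the limit of the Steklov pairings in the dual topology of $\Ldual\times\LWp$ and the fact that the pairing makes sense for a.e.\ $t$, which however is the standard computation carried out in \cite{Sho}.
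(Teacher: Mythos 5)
Your argument is sound, and it is worth pointing out that the paper itself offers no proof of this lemma: it simply refers to Showalter \cite{Sho} and to \cite[Lemma 2.1]{BDM2}. Your Steklov-averaging scheme is exactly the standard mollify-in-time-and-pass-to-the-limit argument that those references implement (Showalter via density of smooth functions in the space of $u\in\LWp$ with $\partial_tu\in\Ldual$), so rather than diverging from the paper you are supplying the details it outsources. Two remarks. First, you correctly read \eqref{lemma.1} through its integrated form $\int_0^\tau\langle\partial_tu,u\rangle_{W^{-1,p'}\times W^{1,p}_0}\dt=\tfrac12\|u(\cdot,\tau)\|_{L^2(\Omega)}^2-\tfrac12\|u(\cdot,0)\|_{L^2(\Omega)}^2$: as printed, \eqref{lemma.1} is missing a $\tfrac{d}{dt}$ on the right-hand side, and it is precisely this differentiated/integrated version that the paper invokes later (estimate of the term $I$ in the proof of the Localization Lemma), so your interpretation matches the intended use. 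Second, the only points your sketch leaves implicit are routine endpoint matters: $[w]_h$ is defined on $(0,T-h)$, so the mollified identities are first obtained away from $t=T$ and the boundary term there is recovered from the $C([0,T];L^2(\Omega))$ continuity coming from \eqref{inclusion}, while the convergence $\partial_t[w]_h=[\partial_tw]_h\to\partial_tw$ holds in $L^{p'}(0,T';W^{-1,p'}(\Omega))$ for every $T'<T$; together with \eqref{identification}, which you use to turn the mollified pairings into $L^2$ inner products, this closes the limit passage. A final cosmetic observation, which concerns the statement rather than your proof: the hypotheses put $u,v\in\LWp$ while the pairing is taken in the duality $W^{-1,p'}\times W^{1,p}_0$; like the paper, you implicitly work with $W^{1,p}_0$-valued functions, which is harmless since that is how the lemma is applied.
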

Note that the previous result makes sense in light of \eqref{inclusion}.
\subsection{The $\boldsymbol V$-function} 
We introduce the  auxiliary vector field  $V_s : \R^n \to \R^n$ defined by
\[
V_s(z):=\big(s^2+|z|^2\big)^{\frac{p-2}{4}}z,
\]
 which is a locally Lipschitz bjiection from $\R^n$ into itself and which turns out to be very useful in particular to deal with monotonicity conditions related to $p$-Laplacian operator. Notice indeed that there holds
\[
\frac{1}{c_V}|\xi_1-\xi_2|^2\leq \frac{|V_s(\xi_1)-V_s(\xi_2)|^2}{(s^2+|\xi_1|^2+|\xi_2|^2)^{\frac{p-2}{2}}} \leq c_V|\xi_1-\xi_2|^2
\]
for all vectors $\xi_1,\xi_2 \in \R^n$ not simultaneously null if $s=0$ and for every $p>1$; the constant $c_V$ depends only on $n,p$. The previous inequality is relevant in manipulations involving the classic monotonicity estimate
\begin{multline*}
\langle a(x,t,\xi_1)-a(x,t,\xi_2), \xi_1-\xi_2 \rangle\\\geq\frac{1}{c(n,p,\nu)} \big(s^2+|\xi_1|^2+|\xi_2|^2\big)^{\frac{p-2}{2}}|\xi_1-\xi_2|^2 
\end{multline*}
for any $\xi_1,\xi_2 \in \R^n$ as above and with $p>1$, which in turn follows by \eqref{assumptions}$_1$ and which, at this point, can be rewritten as
\begin{equation}\label{monotonicity.V}
\langle a(x,t,\xi_1)-a(x,t,\xi_2), \xi_1-\xi_2 \rangle\geq\frac{1}{c_m(n,p,\nu)}\big|V_s(\xi_1)-V_s(\xi_2)\big|^2.
\end{equation}
Moreover the function $V_s$ can be used to rephrase a quite classical inequality, see \cite[Lemma 5]{AM07} and references therein.

\begin{Lemma}\label{fst.lemma}
Let $p>1$. Then there exists a constant $c_\ell\equiv c_\ell(n,p)$ such that for any $\xi_1$, $\xi_2\in\R^{n}$, not both zero, there holds
\[
\big(s+{|\xi_1|}\big)^p\leq c_\ell\big(s+{|\xi_2|}\big)^p+c_\ell\big|V_s(\xi_1)-V_s(\xi_2)\big|^2.
\]
\end{Lemma}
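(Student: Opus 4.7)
The statement is a purely pointwise algebraic inequality about the auxiliary field $V_s$, so I would prove it by a short case analysis based on the relative size of $|\xi_1|$ and $|\xi_2|$, using only the two-sided bound
\[
\frac{1}{c_V}|\xi_1-\xi_2|^2\leq \frac{|V_s(\xi_1)-V_s(\xi_2)|^2}{(s^2+|\xi_1|^2+|\xi_2|^2)^{(p-2)/2}} \leq c_V|\xi_1-\xi_2|^2
\]
recalled just above the statement.

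First, I would reduce to the situation $|\xi_1|>|\xi_2|$, because otherwise $(s+|\xi_1|)^p\leq(s+|\xi_2|)^p$ and the claim holds with $c_\ell=1$. Next, split according to whether $\xi_1$ is only moderately larger than $\xi_2$ and $s$, or genuinely much larger. Concretely I would distinguish the two regimes $|\xi_1|\leq 2(s+|\xi_2|)$ and $|\xi_1|> 2(s+|\xi_2|)$. In the first regime the inequality $(s+|\xi_1|)^p\leq 3^p(s+|\xi_2|)^p$ is immediate, so we can absorb everything into the first term on the right-hand side.

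The substantive case is the second one, where I want to absorb $(s+|\xi_1|)^p$ into $c_\ell|V_s(\xi_1)-V_s(\xi_2)|^2$. Here the assumption $|\xi_1|>2(s+|\xi_2|)$ has three useful consequences: $|\xi_1-\xi_2|\geq|\xi_1|-|\xi_2|\geq |\xi_1|/2$, the bound $s+|\xi_1|\leq 2|\xi_1|$ (since $|\xi_1|>2s$), and $s^2+|\xi_1|^2+|\xi_2|^2$ is comparable to $|\xi_1|^2$ from above (by $3|\xi_1|^2$) and from below (by $|\xi_1|^2$). Plugging these estimates into the lower bound from the displayed inequality above yields
\[
|V_s(\xi_1)-V_s(\xi_2)|^2\geq \frac{1}{c_V}\bigl(s^2+|\xi_1|^2+|\xi_2|^2\bigr)^{\frac{p-2}{2}}|\xi_1-\xi_2|^2\geq c(n,p)\,|\xi_1|^p,
\]
where the treatment of the factor $(s^2+|\xi_1|^2+|\xi_2|^2)^{(p-2)/2}$ splits into the trivial subcase $p\geq 2$ (where the expression is bounded below by $|\xi_1|^{p-2}$) and the subcase $1<p<2$ (where the upper comparability by $3|\xi_1|^2$ gives, after raising to the negative power $(p-2)/2$, a lower bound by $3^{(p-2)/2}|\xi_1|^{p-2}$). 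Finally $|\xi_1|^p\geq 2^{-p}(s+|\xi_1|)^p$, which gives $(s+|\xi_1|)^p\leq c(n,p)\,|V_s(\xi_1)-V_s(\xi_2)|^2$ in this regime.

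The main (and only) subtlety is treating the sign of $p-2$ correctly when inverting the factor $(s^2+|\xi_1|^2+|\xi_2|^2)^{(p-2)/2}$; I would keep the two cases $p\geq 2$ and $1<p<2$ visible in the write-up to avoid any ambiguity. Combining the two regimes yields the asserted inequality with $c_\ell=c_\ell(n,p)$ equal to the maximum of $3^p$ and the constant coming from the second case.
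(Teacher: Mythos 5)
Your argument is correct: the three regimes $|\xi_1|\leq|\xi_2|$, $|\xi_2|<|\xi_1|\leq 2(s+|\xi_2|)$ and $|\xi_1|>2(s+|\xi_2|)$ are exhaustive, the elementary bounds you list in the last regime ($|\xi_1-\xi_2|\geq|\xi_1|/2$, $s+|\xi_1|\leq 2|\xi_1|$, $|\xi_1|^2\leq s^2+|\xi_1|^2+|\xi_2|^2\leq 3|\xi_1|^2$) all follow from the hypothesis of that regime, and your handling of the sign of $p-2$ when estimating $(s^2+|\xi_1|^2+|\xi_2|^2)^{(p-2)/2}$ from below is exactly the point that needs care and is done correctly; the degenerate situation $s=0$ with one of the vectors vanishing is covered because the $V_s$-comparability you invoke is stated precisely for vectors not simultaneously null, which is the lemma's hypothesis. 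The difference with the paper is that the paper gives no argument at all: it simply refers to \cite[Lemma 5]{AM07} and references therein, so your write-up is a self-contained elementary substitute that uses only the two-sided equivalence between $|V_s(\xi_1)-V_s(\xi_2)|^2$ and $(s^2+|\xi_1|^2+|\xi_2|^2)^{(p-2)/2}|\xi_1-\xi_2|^2$ already recalled in the text; this buys transparency of the constant (an explicit $c_\ell(n,p)$ built from $c_V$, $2^p$ and $3^{|p-2|/2}$) at the cost of a short case analysis that the cited reference packages away.
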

\subsection{Auxiliary results}
The following comparison principle has been proved in \cite[Lemma 2.8]{BDM2} in the case the vector field has no dependence on $(x,t)$. The proof in our case requires no modification, since the only assumption used in to treat the elliptic part is (the version without coefficients of) \eqref{monotonicity.V}.
\begin{Lemma}\label{comparison}
Suppose $v,\psi\in\LWp\cap\CL$ satisfy in the weak sense
\[
\begin{cases}
\partial_t\psi-\divergence a(x,t,D\psi)\leq \partial_tv-\divergence a(x,t,Dv)\qquad&\text{in $\OT$},\\[3pt]
\psi\leq v&\text{on $\partial_\Ph\Omega_T$,}
\end{cases}
\]
where $a$ satisfies \eqref{monotonicity.V}. Then $\psi\leq v$ almost everywhere in $\OT$.
\end{Lemma}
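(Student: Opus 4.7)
The plan is the standard energy-type comparison argument: set $w := \psi - v$, interpret the distributional inequality
\[
\partial_t w - \divergence\big[a(x,t,D\psi) - a(x,t,Dv)\big] \leq 0,
\]
and test against the nonnegative excess $w_+ := \max\{w,0\}$. The boundary condition $\psi \leq v$ on $\partial_\Ph \Omega_T$ guarantees both $w_+(\cdot,0) = 0$ a.e. and $w_+(\cdot,t) \in W_0^{1,p}(\Omega)$ for a.e. $t$, so that $w_+$ is an admissible test function on every time slice $\Omega \times (0,\tau^\ast)$. The monotonicity \eqref{monotonicity.V} will then make the elliptic contribution nonnegative, while the parabolic contribution will produce $\tfrac12 \int_\Omega w_+^2(\cdot,\tau^\ast)\,dx$, which is therefore forced to vanish.

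Because no regularity on $\partial_t\psi$ or $\partial_tv$ is assumed beyond $\psi,v\in\CL$, the test must be implemented through Steklov averaging in time, $[g]_h(\tau) := h^{-1}\int_\tau^{\tau+h} g(s)\,ds$. Applied to the differential inequality, this yields, for a.e. $\tau \in (0,T-h)$ and every nonnegative $\varphi \in W_0^{1,p}(\Omega)$,
\begin{equation*}
\int_\Omega \partial_\tau [w]_h(\cdot,\tau)\,\varphi\,dx + \int_\Omega \big\langle [a(\cdot,D\psi) - a(\cdot,Dv)]_h(\cdot,\tau),\, D\varphi\big\rangle\,dx \leq 0.
\end{equation*}
Choosing $\varphi = ([w]_h)_+(\cdot,\tau)$, integrating in $\tau$ from $0$ to some $\tau^\ast < T-h$, and applying the Steklov version of \eqref{lemma.1} leads to
\begin{equation*}
\tfrac12 \int_\Omega ([w]_h)_+^2(\cdot,\tau^\ast)\,dx + \int_0^{\tau^\ast}\!\!\int_\Omega \big\langle [a(\cdot,D\psi) - a(\cdot,Dv)]_h,\, D([w]_h)_+\big\rangle\,dx\,d\tau \leq \tfrac12 \int_\Omega ([w]_h)_+^2(\cdot,0)\,dx.
\end{equation*}

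Letting $h \searrow 0$, the strong $\LWp$ convergence of $[w]_h$ together with $w \in \CL$ and $w_+(\cdot,0) = 0$ a.e. in $\Omega$ make the right-hand side disappear, and the elliptic term converges to
\begin{equation*}
\int_0^{\tau^\ast}\!\!\int_{\{w>0\}} \big\langle a(x,t,D\psi) - a(x,t,Dv),\, D\psi - Dv\big\rangle\,dx\,dt,
\end{equation*}
which by \eqref{monotonicity.V} is bounded below by $c_m^{-1}\iint_{\{w>0\}}|V_s(D\psi) - V_s(Dv)|^2\,dz \geq 0$. Consequently $\tfrac12 \int_\Omega w_+^2(\cdot,\tau^\ast)\,dx \leq 0$ for a.e. $\tau^\ast \in (0,T)$, and hence $\psi \leq v$ a.e. in $\Omega_T$.

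The main technical point is the Steklov step: verifying that $([w]_h)_+$ lies in $W_0^{1,p}(\Omega)$ for a.e. $\tau$ (using that the lateral condition $\psi \leq v$ transfers to $[w]_h$) and justifying the passage to the limit in the mollified elliptic integrand. All remaining arguments are driven by coercivity; neither the VMO hypothesis \eqref{ass.omega} nor any fine structure of $a$ beyond the monotonicity encoded in \eqref{monotonicity.V} enters the proof.
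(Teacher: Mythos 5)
Your argument is correct and coincides with the proof the paper relies on: the paper simply cites \cite[Lemma 2.8]{BDM2} and observes that only the monotonicity \eqref{monotonicity.V} is used, and that cited proof is exactly the Steklov-averaging argument you write out — testing the difference inequality with the positive part $(\psi-v)_+$, discarding the elliptic term by monotonicity, and using the parabolic boundary condition to kill the initial and lateral contributions. No gaps; your closing remark that neither \eqref{ass.omega} nor any structure beyond monotonicity enters is precisely the paper's justification for allowing $(x,t)$-dependent coefficients.
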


The following is the higher-integrability result for local solutions to parabolic $p$-Laplacian systems by Kinnunen and Lewis. We restate it for equations with zero right-hand side, including also minor modifications to adapt it to this situation. Note that in general is the better estimate we can expect for such equation, due to the low degree of regularity of the partial map $x\mapsto a(x,t,\xi)$.
\begin{Theorem}\label{teo.hh}
Let $\mathcal K=C\times I\subset\Omega_T$ and let $\tilde v\in L^p_\loc(I;W^{1,p}_\loc(C))$, $p>\frac{2n}{n+2}$, be a local weak solution to
\[
\tilde v_t-\divergence a(x,t,D\tilde v)=0\qquad\text{in $\mathcal K$.} 
\]
Then there exists a constant $\varepsilon_0>0$ depending on $n,p,\nu,L$, such that $D\tilde v \in L^{p(1+\varepsilon_0)}_\loc(\mathcal K)$ and moreover if $Q_{2R}^\lambda(z_0)\subset\mathcal K$ is a cylinder where the intrinsic relation
\[
\mean{Q_{2R}^\lambda}\big(s+|D\tilde v|\big)^p\dz\leq \kappa\lambda^p
\]
holds for some constant $\kappa\geq1$, then
\begin{equation}\label{hi.int.homo}
 \mean{Q_{R}^\lambda}\big(s+|D\tilde v|\big)^{p(1+\varepsilon)}\dz\leq c\,\lambda^{p(1+\varepsilon)}
\end{equation}
for any $\epsilon\in (0,\varepsilon_0]$ and for a constant $c\equiv c(n,p,\nu,L,\kappa)$.
\end{Theorem}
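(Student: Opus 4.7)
The plan is to follow the standard Kinnunen–Lewis self-improvement scheme for evolutionary $p$-Laplacian equations, adapted to the intrinsic-cylinder formulation used in this paper. Since the hypothesis provides the intrinsic control $\mean{Q_{2R}^\lambda}(s+|D\tilde v|)^p \dz \leq \kappa\lambda^p$, the main task is to establish a reverse Hölder inequality with intrinsic scaling and then upgrade it via a Vitali/stopping-time argument to gain integrability above $p$.

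First I would derive an \emph{intrinsic Caccioppoli inequality}: for every sub-cylinder $Q_\rho^\lambda(z_1) \subset Q_{2R}^\lambda(z_0)$ and any constant $\tilde v_0 \in \R$, test the equation (after a Steklov regularisation) with $\varphi=\eta^p(\tilde v-\tilde v_0)$ where $\eta$ is a standard parabolic cut-off compatible with the intrinsic geometry \eqref{cylinders}. Using the ellipticity and growth conditions \eqref{assumptions}, one obtains a bound of the form
\begin{equation*}
\sup_{t\in \Lambda_{\rho/2}^\lambda}\mean{B_{\rho/2}^\lambda}|\tilde v-\tilde v_0|^2\dx +\mean{Q_{\rho/2}^\lambda}(s+|D\tilde v|)^p\dz \leq c\mean{Q_\rho^\lambda}\biggl(\Bigl|\frac{\tilde v-\tilde v_0}{\rho}\Bigr|^p + \lambda^{p-2}\Bigl|\frac{\tilde v-\tilde v_0}{\rho}\Bigr|^2\biggr)\dz.
\end{equation*}

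Second, I would combine this with a parabolic Sobolev–Poincaré inequality (chosen in accordance with the scaling in \eqref{cylinders}, i.e.\ different in the degenerate and singular ranges), choosing $\tilde v_0$ suitably as a slice average, to produce a \emph{reverse Hölder estimate} of the form
\begin{equation*}
\mean{Q_{\rho/2}^\lambda}(s+|D\tilde v|)^p\dz \leq c\biggl(\mean{Q_\rho^\lambda}(s+|D\tilde v|)^{p\theta}\dz\biggr)^{1/\theta} + c\lambda^p,
\end{equation*}
valid on every intrinsic sub-cylinder $Q_\rho^\lambda(z_1) \subset Q_R^\lambda(z_0)$, for some $\theta=\theta(n,p)<1$. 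Here the role of the extra $\lambda^p$ term is to absorb the mismatch between the two pieces coming from the Poincaré-type estimate after using the intrinsic geometry.

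The hard part will be \emph{converting this reverse Hölder inequality into a higher-integrability statement}. Because the intrinsic cylinders depend on the solution through the parameter $\lambda$, the classical Hardy–Littlewood maximal function and the standard Calderón–Zygmund cube decomposition are unavailable. I would therefore implement a Vitali-type covering argument on super-level sets of $(s+|D\tilde v|)^p$: at each point of the level set, choose the maximal intrinsic cylinder on which the $L^p$ average of $(s+|D\tilde v|)^p$ exceeds a prescribed threshold comparable to $\lambda^p$ (a stopping-time procedure), apply the reverse Hölder inequality on this cylinder together with the intrinsic upper bound from the hypothesis, and extract a disjoint sub-family by the Vitali covering lemma. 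Summing the resulting distributional inequality and multiplying through by $\mu^\varepsilon$ and integrating in $\mu$ produces, for every small $\varepsilon\in(0,\varepsilon_0]$, the bound \eqref{hi.int.homo}, where the constant depends on $n,p,\nu,L,\kappa$ through the threshold chosen in the stopping-time step. This is precisely the argument of \cite{KL} (stated for systems), and the modifications needed for equations and for the $\leq\kappa\lambda^p$ formulation are purely notational — the constant simply acquires a dependence on $\kappa$.
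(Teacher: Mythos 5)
Your outline is mathematically sound and reproduces the Kinnunen--Lewis machinery (intrinsic Caccioppoli, parabolic Sobolev--Poincar\'e matched to the scaling \eqref{cylinders}, reverse H\"older on intrinsic sub-cylinders, and a Vitali/stopping-time argument in place of the unavailable Calder\'on--Zygmund decomposition), which is precisely what underlies this statement. The paper, however, does not re-derive any of this: its proof consists of a single sentence citing \cite[Lemma~3]{AM07} for $p\geq2$ and \cite[Lemma~4]{AM07} for $p<2$ ``with minor modification,'' so you have correctly identified the method but expended effort reconstructing a result the paper simply imports from Acerbi--Mingione.
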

\begin{proof}
The proof follows in the case $p\geq2$ from \cite[Lemma 3]{AM07} and in the case $p<2$ from \cite[Lemma 4]{AM07}, with minor modification.
\end{proof}

Once known that the dependence of the vector field with respect of the spatial variable is more regular, one can expect gradient boundedness. The celebrated intrinsic $\sup$-bound for the gradient by DiBenedetto, see \cite[Chapter 8]{DiBenedetto}, \cite[Section 7]{KMIbero}, \cite{Min11}, is indeed encoded in the following
\begin{Theorem}\label{sup.Thm}
Let $\mathcal K=C\times I\subset\Omega_T$ and let $\tilde w\in L^p_\loc(I;W^{1,p}_\loc(C))$, $p>\frac{2n}{n+2}$, be a local weak solution to
\[
\partial_tw-\divergence \bar a(t,Dw)=0\qquad\text{in $\mathcal K$}
\]
where the vector field $\bar a:I\times\R^n\to\R^n$ satisfies \eqref{assumptions}, recast to the case with no $x$-dependence. Then $Du\in L^\infty_\loc(\mathcal K)$; moreover, if the cylinder $Q_{2R}^\lambda(z_0)\subset\mathcal K$ is such that
\[
\mean{Q_{2R}^\lambda(z_0)}\big(|Dw|+s\big)^p\dz\leq\kappa\lambda^p
\]
for some constant $\kappa\geq1$, then 
\[
\sup_{Q_R^\lambda(z_0)}|Dw|+s\leq c_{DiB}\lambda
\]
for a constant $c_{DiB}$ depending on $n,p,\nu,L,\kappa$.
\end{Theorem}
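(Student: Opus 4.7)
My plan is to reduce to a standard cylinder via the intrinsic parabolic scaling and then to run the classical De Giorgi iteration for the gradient, crucially exploiting that $\bar a$ has no $x$-dependence. For $p\ge2$, setting
\[
v(y,\tau):=(\lambda R)^{-1}w(x_0+Ry,\,t_0+\lambda^{2-p}R^2\tau),\qquad \tilde a(\tau,\xi):=\lambda^{1-p}\bar a(t_0+\lambda^{2-p}R^2\tau,\,\lambda\xi),
\]
(and analogously for $p<2$), $v$ becomes a local weak solution to $\partial_\tau v-\divergence\tilde a(\tau,Dv)=0$ on the standard cylinder $Q_2=B_2\times(-4,0)$, the rescaled field $\tilde a$ still satisfies \eqref{assumptions} with unchanged constants $\nu,L$ and rescaled degeneracy parameter $\tilde s:=s/\lambda\in[0,1]$, and the hypothesis reduces to $\mean{Q_2}(|Dv|+\tilde s)^p\dz\le\kappa$. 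It is therefore enough to prove a universal bound $\sup_{Q_1}|Dv|\le c(n,p,\nu,L,\kappa)$.

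Because $\tilde a$ is independent of $y$, one can regularize the equation in time through Steklov averages, differentiate in the direction $y_k$, and pass to the limit to see that each $\partial_kv$ weakly solves a linear parabolic equation whose matrix $\partial_{\xi_j}\tilde a_i(\tau,Dv)$ is controlled from above and from below by $(\tilde s^2+|Dv|^2)^{(p-2)/2}$ thanks to \eqref{assumptions}. Summing in $k$ and testing with the cut-off $\eta^2(|Dv|^2-K^2)_+$ then yields the Caccioppoli inequality on super-level sets
\begin{align*}
\sup_{\tau}\int_{B_r}(|Dv|^2-K^2)_+\eta^2\,dy&+\int_{Q_r}(\tilde s^2+|Dv|^2)^{\frac{p-2}{2}}\bigl|D(|Dv|^2-K^2)_+\bigr|^2\eta^2\dz\\
&\le\frac{c}{(\rho-r)^2}\int_{Q_\rho}(|Dv|^2-K^2)_+\bigl(|Dv|^2+\tilde s^2\bigr)\dz,
\end{align*}
valid for every $K\ge0$ and every pair of concentric standard cylinders $Q_r\subset Q_\rho\subset Q_2$.

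With this Caccioppoli at hand, the standard DiBenedetto iteration runs on nested cylinders $Q_{r_j}$, $r_j=1+2^{-j}$, and on levels $K_j^2=K_\infty^2(1-2^{-j})$: parabolic Sobolev embedding combined with the higher-integrability of Theorem \ref{teo.hh} lets me re-absorb the factor $(|Dv|^2+\tilde s^2)$ on the right-hand side and yields the classical recursion $Y_{j+1}\le c\,b^j\,Y_j^{1+\delta}$ for $Y_j:=\int_{Q_{r_j}}(|Dv|^2-K_j^2)_+\dz$, with $\delta=\delta(n,p)>0$. Choosing $K_\infty=c_{DiB}$ large enough, depending only on $n,p,\nu,L,\kappa$, forces $Y_0$ to fall below the threshold of the fast-geometric-convergence lemma, so that $Y_j\to0$, $|Dv|\le K_\infty$ a.e.\ on $Q_1$, and unscaling yields the claim. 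The main technical obstacle is the Caccioppoli in the singular regime $\frac{2n}{n+2}<p<2$, where the weight $(\tilde s^2+|Dv|^2)^{(p-2)/2}$ on the left-hand side is singular rather than elliptic; the remedy, following \cite{DiBenedetto}, is to iterate on $(|Dv|-K_j)_+$ with levels $K_j$ of unit order, so that on the super-level sets $\{|Dv|>K_j\}$ the weight is bounded below by a universal constant. This is precisely where the intrinsic cylinders \eqref{cylinders} earn their role: the scaling forces the target level $K_\infty$ to be of order one in the rescaled picture, hence of order $\lambda$ after unscaling.
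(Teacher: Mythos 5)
The paper does not actually prove Theorem \ref{sup.Thm}: it is quoted as a known result from \cite[Chapter 8]{DiBenedetto}, \cite{DiBenedettoFried} and \cite[Section 7]{KMIbero}, so what you are reconstructing is DiBenedetto's classical argument. Your reduction to the standard cylinder is correct (the rescaled field keeps $\nu,L$ and has degeneracy parameter $\tilde s=s/\lambda\in[0,1]$ since $\lambda\geq1$), and for $p\geq2$ your outline --- differentiate the equation in space, which is admissible precisely because $\bar a$ has no $x$-dependence (modulo the approximation/difference-quotient justification you only gesture at), Caccioppoli on super-level sets of $|Dv|^2$, De Giorgi iteration, with the intrinsic geometry converting the order-one bound into $c_{DiB}\lambda$ --- is indeed the classical route.

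The genuine gap is the singular range $\frac{2n}{n+2}<p<2$, which is part of the statement. Your remedy rests on the claim that on $\{|Dv|>K_j\}$, with $K_j$ of unit order, the weight $(\tilde s^2+|Dv|^2)^{\frac{p-2}{2}}$ is bounded below by a universal constant. This is backwards: for $p<2$ the exponent is negative, so the weight is a decreasing function of $|Dv|$, and on a super-level set you only obtain the upper bound $(\tilde s^2+|Dv|^2)^{\frac{p-2}{2}}\leq K_j^{p-2}$; a lower bound would require an a priori upper bound on $|Dv|$, i.e.\ exactly the quantity being proved, so the argument is circular. Consequently the coercive gradient term in your Caccioppoli inequality has no lower bound on the sets where you need it, the parabolic Sobolev step degenerates, and the recursion $Y_{j+1}\leq c\,b^jY_j^{1+\delta}$ does not follow. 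In the actual proofs the singular case is handled by a different mechanism: one iterates on increasing powers of the gradient (Moser-type), absorbing the weight into the iterated quantity via identities of the form $(\tilde s^2+|Dv|^2)^{\frac{p-2}{2}}\big|D(\tilde s^2+|Dv|^2)^{\frac{\beta+1}{2}}\big|^2\geq c(\beta,p)\,\big|D(\tilde s^2+|Dv|^2)^{\frac{2\beta+p}{4}}\big|^2$, and the restriction $p>\frac{2n}{n+2}$, as well as the deficit $d=2p/[p(n+2)-2n]$ of \eqref{def-d}, enter through the parabolic interpolation, not through a pointwise bound on the weight. A secondary remark: the classical iteration does not use the higher integrability of Theorem \ref{teo.hh}; the factor $(|Dv|^2+\tilde s^2)$ on the right-hand side is absorbed through the level structure $K_j^2-K_{j-1}^2=2^{-j}K_\infty^2$ together with the sup- and energy-terms of the previous step via the parabolic embedding, and in general (when $p(1+\varepsilon_0)<4$) Theorem \ref{teo.hh} alone would not even render that right-hand side finite a priori.
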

\subsection{Technical tools}
This first Lemma is the classic Hardy's inequality; see \cite[Theorem 330]{Hardy} or \cite{Hunt}.
\begin{Lemma}\label{Hardy}
Let $f:[0,+\infty)\to[0,+\infty)$ be a measurable function such that
\begin{equation}\label{well.posed}
 \int_0^\infty f(\lambda)\,d\lambda<\infty;
\end{equation}
then for any $\alpha\geq1$ and for any $r>0$ there holds
\[
 \int_0^\infty\lambda^r{\biggl(\int_\lambda^\infty f(\mu)\,d\mu\biggr)}^\alpha\frac{d\lambda}{\lambda}\leq\Bigl(\frac{\alpha}{r}\Big)^\alpha\int_0^\infty\lambda^r{\big[\lambda f(\lambda)\big]}^\alpha\frac{d\lambda}{\lambda}.
\]
\end{Lemma}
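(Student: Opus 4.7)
My plan is to follow the classical approach to Hardy-type inequalities: integrate by parts, apply Hölder's inequality, and rearrange. Define the tail function $F(\lambda) := \int_\lambda^\infty f(\mu)\,d\mu$, which by the hypothesis \eqref{well.posed} satisfies $F(0) < \infty$, is nonincreasing, absolutely continuous with $F'(\lambda)=-f(\lambda)$ a.e., and tends to $0$ at infinity. Write
\[
I := \int_0^\infty \lambda^{r-1}F(\lambda)^{\alpha}\,d\lambda, \qquad J := \int_0^\infty \lambda^{r+\alpha-1}f(\lambda)^{\alpha}\,d\lambda;
\]
we may of course assume $J<\infty$, otherwise the inequality is trivial. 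The case $\alpha=1$ reduces at once to Fubini's theorem: swapping the order of integration in $\int_0^\infty \lambda^{r-1} F(\lambda)\,d\lambda$ produces the inner integral $\int_0^\mu \lambda^{r-1}\,d\lambda=\mu^r/r$, which gives exactly $I=J/r$, matching $(\alpha/r)^\alpha J$ for $\alpha=1$.

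For $\alpha>1$ I work on the truncated interval $[0,N]$. Integration by parts with $u=F^\alpha$ and $dv=\lambda^{r-1}d\lambda$ yields
\[
I_N := \int_0^N \lambda^{r-1}F^\alpha\,d\lambda = \frac{N^r}{r}F(N)^\alpha + \frac{\alpha}{r}\int_0^N \lambda^r F^{\alpha-1} f\,d\lambda,
\]
where the boundary contribution at $0$ vanishes because $\lambda^r F(\lambda)^\alpha\to 0$ as $\lambda\to0^+$ (using $r>0$ and $F(0)<\infty$). Next I apply Hölder's inequality with conjugate exponents $\alpha/(\alpha-1)$ and $\alpha$, splitting the integrand as
\[
\lambda^r F^{\alpha-1} f = \bigl[\lambda^{(r-1)(\alpha-1)/\alpha} F^{\alpha-1}\bigr]\cdot\bigl[\lambda^{(r+\alpha-1)/\alpha} f\bigr],
\]
so that the two factors raised to their respective exponents exactly reconstitute $\lambda^{r-1}F^\alpha$ and $\lambda^{r+\alpha-1}f^\alpha$. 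This gives
\[
\int_0^N \lambda^r F^{\alpha-1} f\,d\lambda \le I_N^{(\alpha-1)/\alpha}\,J^{1/\alpha}.
\]

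The main obstacle is disposing of the boundary term $N^r F(N)^\alpha/r$ and guaranteeing that $I_N$ stays finite as $N\to\infty$. For the former I apply Hölder to the representation $F(N)=\int_N^\infty f\,d\mu=\int_N^\infty \lambda^{(r+\alpha-1)/\alpha}f\cdot\lambda^{-(r+\alpha-1)/\alpha}\,d\lambda$; since $r>0$ makes $\lambda^{-(r+\alpha-1)/(\alpha-1)}$ integrable near infinity, the resulting estimate reads $N^r F(N)^\alpha\le C(r,\alpha)\int_N^\infty \lambda^{r+\alpha-1}f^\alpha\,d\lambda$, and this vanishes as $N\to\infty$ since $J<\infty$. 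Consequently
\[
I_N \le \varepsilon_N + \frac{\alpha}{r}\,I_N^{(\alpha-1)/\alpha}\,J^{1/\alpha}, \qquad \varepsilon_N\to 0.
\]
A standard Young's-inequality argument (or direct algebraic analysis of $x\le a+bx^{(\alpha-1)/\alpha}$) shows that $I_N$ is uniformly bounded in $N$; passing to the limit, dividing both sides by $I^{(\alpha-1)/\alpha}$ (which is legitimate once $I<\infty$ is established, and trivially true if $I=0$), and raising to the $\alpha$-th power yields precisely $I\le(\alpha/r)^\alpha J$, as required.
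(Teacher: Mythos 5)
Your proof is correct: the $\alpha=1$ case via Fubini, the integration by parts on $[0,N]$ (with the boundary term at $0$ killed by $r>0$ and $F(0)<\infty$), the H\"older splitting $\lambda^r F^{\alpha-1}f=\bigl[\lambda^{(r-1)(\alpha-1)/\alpha}F^{\alpha-1}\bigr]\cdot\bigl[\lambda^{(r+\alpha-1)/\alpha}f\bigr]$, the decay of $N^rF(N)^\alpha$ under the harmless assumption $J<\infty$, and the absorption argument relying on the finiteness of each $I_N$ are all sound. The paper does not prove this lemma at all but cites the classical reference (Hardy--Littlewood--P\'olya, Theorem 330, and Hunt), and your argument is precisely that standard proof, so it matches the intended route.
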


The following reverse-H\"older inequality is also classic; for its proof, see \cite[Appendix B.3]{Stein} for $\lambda=0$ or  \cite{BL}.
\begin{Lemma}\label{Stein}
Let $h:[0,+\infty)\to[0,+\infty)$ be a non-increasing, measurable function and let $\alpha_1\leq \alpha_2\leq\infty$ and $r>0$. Then, if $\alpha_2<\infty$
\[
\biggl[\int_\lambda^\infty\big[\mu^rh(\mu)\big]^{\alpha_2}\frac{d\mu}{\mu}\biggr]^{1/\alpha_2} \leq  \lambda^r h(\lambda)+c\,\biggl[\int_\lambda^\infty\big[\mu^rh(\mu)\big]^{\alpha_1}\frac{d\mu}{\mu}\biggr]^{1/\alpha_1}
\]
for any $\lambda\geq0$; if $\alpha_2=\infty$ then
\begin{equation}\label{Bram}
\sup_{\mu>\lambda}\big[\mu^rh(\mu)\big]\leq c\, \lambda^rh(\lambda)+c\,\biggl[\int_\lambda^\infty\big[\mu^rh(\mu)\big]^{\alpha_1}\frac{d\mu}{\mu}\biggr]^{1/\alpha_1}. 
\end{equation}
The constant $c$ depends only on $\alpha_1,\alpha_2,r$ except in the case $\alpha_2=\infty$. In this case $c\equiv c(\alpha_1,r)$.
\end{Lemma}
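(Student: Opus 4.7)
The stated inequality is a one-dimensional reverse-Hölder-type/interpolation inequality that exploits the fact that $h$ is non-increasing. My plan is to prove the supremum case $\alpha_2=\infty$ first and then derive the finite-$\alpha_2$ case by interpolation.

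\textbf{Step 1: the case $\alpha_2=\infty$.} Fix $\mu>\lambda$ and split into two regimes. If $\mu<2\lambda$, then by monotonicity $h(\mu)\leq h(\lambda)$ and trivially $\mu^r h(\mu)\leq 2^r\lambda^r h(\lambda)$. If instead $\mu\geq 2\lambda$, I would exploit the interval $[\mu/2,\mu]\subset(\lambda,\infty)$: since $h$ is non-increasing,
\[
\int_{\mu/2}^{\mu}\big[\sigma^r h(\sigma)\big]^{\alpha_1}\frac{d\sigma}{\sigma}
\;\geq\; h(\mu)^{\alpha_1}\int_{\mu/2}^{\mu}\sigma^{r\alpha_1-1}\,d\sigma
\;=\; h(\mu)^{\alpha_1}\,\mu^{r\alpha_1}\,\frac{1-2^{-r\alpha_1}}{r\alpha_1},
\]
which yields $\mu^r h(\mu)\leq c(r,\alpha_1)\Bigl[\int_\lambda^\infty[\sigma^r h(\sigma)]^{\alpha_1}\frac{d\sigma}{\sigma}\Bigr]^{1/\alpha_1}$. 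Taking the supremum over $\mu>\lambda$ and combining the two regimes gives \eqref{Bram}.

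\textbf{Step 2: the case $\alpha_1\le\alpha_2<\infty$.} Here I would use the elementary pointwise bound
\[
\bigl[\mu^r h(\mu)\bigr]^{\alpha_2}\;\leq\;\Bigl(\sup_{\sigma>\lambda}\sigma^r h(\sigma)\Bigr)^{\alpha_2-\alpha_1}\bigl[\mu^r h(\mu)\bigr]^{\alpha_1}
\]
valid for $\mu>\lambda$. Integrating in $d\mu/\mu$ over $(\lambda,\infty)$ and taking the $\alpha_2$-th root yields
\[
\biggl[\int_\lambda^\infty\bigl[\mu^r h(\mu)\bigr]^{\alpha_2}\frac{d\mu}{\mu}\biggr]^{1/\alpha_2}
\leq\Bigl(\sup_{\sigma>\lambda}\sigma^r h(\sigma)\Bigr)^{(\alpha_2-\alpha_1)/\alpha_2}
\biggl[\int_\lambda^\infty\bigl[\mu^r h(\mu)\bigr]^{\alpha_1}\frac{d\mu}{\mu}\biggr]^{\alpha_1/(\alpha_1\alpha_2)}.
\]
Setting $M:=\lambda^r h(\lambda)+c\bigl[\int_\lambda^\infty[\mu^r h(\mu)]^{\alpha_1}d\mu/\mu\bigr]^{1/\alpha_1}$, Step 1 gives $\sup_{\sigma>\lambda}\sigma^r h(\sigma)\leq c\,M$, while clearly $\bigl[\int\cdots\bigr]^{1/\alpha_1}\leq M$. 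Multiplying the two bounds (with exponents $(\alpha_2-\alpha_1)/\alpha_2$ and $\alpha_1/\alpha_2$ respectively, which sum to $1$) collapses the right-hand side to $c\,M$, which is exactly the desired estimate. The case $\alpha_1=\alpha_2$ is trivial (one may take $c=1$ and drop $\lambda^r h(\lambda)$).

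\textbf{Expected obstacle.} The argument is essentially elementary; the only delicate point is making sure the exponents line up in the interpolation of Step 2, in particular that the factor $\bigl[\int [\mu^r h]^{\alpha_1}d\mu/\mu\bigr]^{1/\alpha_2}$ (not $1/\alpha_1$) combines correctly with $M^{(\alpha_2-\alpha_1)/\alpha_2}$ to give $M$. One must also verify that the integral on the right-hand side is finite in Step 1—if it is not, the inequality holds trivially with $+\infty$ on the right—so the splitting $\mu\lessgtr 2\lambda$ is the only real content of the proof.
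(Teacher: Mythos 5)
Your argument is correct in substance, and it is worth noting that the paper itself offers no proof of this lemma: it simply points to \cite[Appendix B.3]{Stein} (for $\lambda=0$) and to \cite{BL}. Your two-step scheme -- first the $\alpha_2=\infty$ case by exploiting monotonicity of $h$ on the interval $[\mu/2,\mu]\subset(\lambda,\infty)$, then the finite case by the pointwise interpolation $[\mu^rh(\mu)]^{\alpha_2}\le(\sup_{\sigma>\lambda}\sigma^rh(\sigma))^{\alpha_2-\alpha_1}[\mu^rh(\mu)]^{\alpha_1}$ -- is exactly the classical route behind those references, so you are reconstructing the intended proof rather than deviating from it. The only point where your output is strictly weaker than the statement is the finite-$\alpha_2$ inequality: the lemma claims coefficient $1$ in front of $\lambda^rh(\lambda)$, whereas your multiplication of the bounds $\sup\le cM$ and $[\int\cdots]^{1/\alpha_1}\le M$ yields $c\,\lambda^rh(\lambda)+c\,[\int\cdots]^{1/\alpha_1}$. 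This is immaterial for every use of the lemma in the paper (in the embedding remark it is applied with $\lambda=0$, and in the case $0<q<\gamma$ of the main proof the term is in any case absorbed into a constant), and it can be repaired inside your own scheme: instead of multiplying the two bounds, apply Young's inequality $a^{1-\theta}b^{\theta}\le(1-\theta)a+\theta b$ with $\theta=\alpha_1/\alpha_2$, $a=\sup_{\sigma>\lambda}\sigma^rh(\sigma)$ and $b=[\int_\lambda^\infty[\mu^rh(\mu)]^{\alpha_1}\,d\mu/\mu]^{1/\alpha_1}$, and then use a version of Step 1 with coefficient $1$ on $\lambda^rh(\lambda)$ (obtained, for $\mu>\lambda$, from $\mu^{r\alpha_1}h(\mu)^{\alpha_1}\le\lambda^{r\alpha_1}h(\lambda)^{\alpha_1}+r\alpha_1\int_\lambda^\mu\sigma^{r\alpha_1-1}h(\sigma)^{\alpha_1}\,d\sigma$, valid by monotonicity, at least when $\alpha_1\ge1$). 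Apart from this cosmetic issue about constants, your proof is complete; your closing remark that one may assume the right-hand integral finite is also the correct way to dispose of the degenerate case.
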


The following is a  a standard H\"older type inequality in Marcinkiewicz spaces; see \cite[Lemma 2.8]{Min}.
\begin{Lemma}\label{lab.Marci}
 Let $f\in \mathcal M^\gamma(A)$ for $A\subset\R^k$, $k\geq1$, of finite measure. Then
 \[
 \int_A|f|^\eta\dz\leq \frac{\gamma}{\gamma-\eta}|A|^{1-\frac{\eta}{\gamma}}\|f\|_{\mathcal M^\gamma(A)}^\eta,
 \]
 for any $\eta\in[1,\gamma)$.
\end{Lemma}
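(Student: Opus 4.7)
The plan is a standard layer-cake argument with an optimized split. First I would rewrite the left-hand side via the distribution function $d_f(\lambda):=\big|\{z\in A:|f(z)|>\lambda\}\big|$. By Fubini's theorem,
$$\int_A |f|^\eta\dz = \eta\int_0^\infty \lambda^{\eta-1}\,d_f(\lambda)\,d\lambda.$$
Two pointwise bounds on $d_f$ are available: the trivial bound $d_f(\lambda)\leq |A|$, which is useful for small $\lambda$, and the Marcinkiewicz bound $d_f(\lambda)\leq \|f\|_{\mathcal M^\gamma(A)}^\gamma\,\lambda^{-\gamma}$, which is effective for large $\lambda$ and which forces us to require $\eta<\gamma$ to get integrable tails.

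The key step is to split the $\lambda$-integral at a free threshold $\lambda_0>0$. On $(0,\lambda_0)$ the trivial bound yields
$$\eta\int_0^{\lambda_0}\lambda^{\eta-1}\,d_f(\lambda)\,d\lambda\leq |A|\,\lambda_0^\eta,$$
while on $(\lambda_0,\infty)$ the Marcinkiewicz bound gives
$$\eta\int_{\lambda_0}^\infty \lambda^{\eta-1}\,d_f(\lambda)\,d\lambda\leq \eta\,\|f\|_{\mathcal M^\gamma(A)}^\gamma\int_{\lambda_0}^\infty\lambda^{\eta-\gamma-1}\,d\lambda=\frac{\eta}{\gamma-\eta}\,\|f\|_{\mathcal M^\gamma(A)}^\gamma\,\lambda_0^{\eta-\gamma}.$$

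Finally I would optimize in $\lambda_0$. Balancing the two contributions leads to the choice $\lambda_0:=\|f\|_{\mathcal M^\gamma(A)}\,|A|^{-1/\gamma}$. With this value the two pieces both equal $\|f\|_{\mathcal M^\gamma(A)}^\eta\,|A|^{1-\eta/\gamma}$ up to the factor $\eta/(\gamma-\eta)$ in the second one, and their sum collapses exactly to
$$\Bigl(1+\frac{\eta}{\gamma-\eta}\Bigr)\|f\|_{\mathcal M^\gamma(A)}^\eta\,|A|^{1-\eta/\gamma}=\frac{\gamma}{\gamma-\eta}\,|A|^{1-\eta/\gamma}\,\|f\|_{\mathcal M^\gamma(A)}^\eta,$$
which is the claimed estimate. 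There is no real obstacle: the only point requiring a bit of care is picking the balancing threshold precisely so as to recover the sharp constant $\gamma/(\gamma-\eta)$, rather than a larger one; any suboptimal choice would still give the qualitative inequality but with a worse absolute constant. The hypothesis $\eta\geq 1$ plays no role in the argument beyond guaranteeing that the left-hand side is a genuine $L^\eta$-norm.
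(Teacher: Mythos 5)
Your argument is correct and complete: the layer-cake formula, the two bounds on the distribution function, the split at $\lambda_0$ and the balancing choice $\lambda_0=\|f\|_{\mathcal M^\gamma(A)}|A|^{-1/\gamma}$ reproduce exactly the constant $\gamma/(\gamma-\eta)$. The paper does not prove the lemma but refers to \cite[Lemma 2.8]{Min}, and the proof there is precisely this standard splitting of the distribution-function integral, so your approach coincides with the cited one.
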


Finally, a very well-known iteration lemma.
\begin{Lemma}\label{it.lemma}
Let $\phi:[R,2R]\to[0,\infty)$ be a function such that
\[
	\phi(r_1)\leq\frac12\phi(r_2)+\mathcal{A}+\frac{\mathcal{B}}{(r_2-r_1)^\beta}
	\qquad\text{for every}\ R\leq r_1<r_2\leq2R,
\]
where $\mathcal{A},\mathcal{B}\geq0$ and $\beta>0$. Then
\[
	\phi(R)\leq c(\beta)\, \bigg[\mathcal{A}+\frac{\mathcal{B}}{R^\beta}\bigg].
\]
\end{Lemma}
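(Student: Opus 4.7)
This is the classical Giaquinta--Giusti ``hole-filling'' iteration lemma, and the natural strategy is a geometric-iteration argument on a sequence of radii strictly increasing from $R$ to $2R$. The plan is to pick a ratio $\tau\in(0,1)$ close enough to $1$ so that the compounded factor $(1/2)\tau^{-\beta}$ is strictly less than $1$, set up a telescoping chain of instances of the hypothesis, and finally sum the resulting geometric series.

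Concretely, I would fix $\tau$ so that $\theta:=(1/2)\tau^{-\beta}<1$; the convenient choice is $\tau:=2^{-1/(2\beta)}$, giving $\theta=2^{-1/2}$. Define the increasing sequence
\[
r_0:=R,\qquad r_{i+1}:=r_i+(1-\tau)\tau^{i}R,
\]
so that a telescoping computation yields $r_i=R\bigl(2-\tau^{i}\bigr)\in[R,2R)$, with $r_i\nearrow 2R$, and $r_{i+1}-r_i=(1-\tau)\tau^{i}R$. Applying the hypothesis with $r_1=r_i$ and $r_2=r_{i+1}$ gives
\[
\phi(r_i)\leq\tfrac12\phi(r_{i+1})+\mathcal{A}+\frac{\mathcal{B}}{(1-\tau)^{\beta}\tau^{i\beta}R^{\beta}}.
\]
Iterating this inequality $k$ times from $i=0$ to $i=k-1$ produces
\[
\phi(R)\leq\Bigl(\tfrac12\Bigr)^{k}\phi(r_{k})+\mathcal{A}\sum_{i=0}^{k-1}\Bigl(\tfrac12\Bigr)^{i}+\frac{\mathcal{B}}{(1-\tau)^{\beta}R^{\beta}}\sum_{i=0}^{k-1}\theta^{i}.
\]
Since $\theta<1$ by construction, both geometric series are uniformly bounded in $k$, so passing $k\to\infty$ leaves only the term $(1/2)^{k}\phi(r_k)$ to dispose of, which tends to $0$ as long as $\phi$ is bounded on $[R,2R]$; the resulting estimate is exactly the claimed one, with $c(\beta)$ given by the maximum of $(1-\theta)^{-1}$ and $(1-\theta)^{-1}(1-\tau)^{-\beta}$.

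The only delicate point is the last step: to kill the residual term $(1/2)^{k}\phi(r_k)$ one needs $\phi$ to be \emph{a priori} finite on $[R,2R]$ (or at least on a sequence $r_k\nearrow 2R$). I would handle this by the standard device of noting that in every application of the lemma in this paper $\phi$ is already known to be bounded, and by first replacing $\phi$ with $\min\{\phi,N\}$ for an arbitrary $N<\infty$ if one wishes a fully general statement—the constants in the conclusion are independent of $N$, so one may pass to $N\to\infty$ at the end. This is a routine but necessary safeguard; apart from it, the proof is purely combinatorial and depends only on the choice of $\tau$, which in turn depends only on $\beta$, in accordance with the dependence $c(\beta)$ claimed in the statement.
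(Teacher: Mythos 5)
The paper states this lemma without proof, as a classical fact, so your geometric-iteration argument is being judged on its own; in substance it is the standard proof and it is correct in its main structure: the choice $\tau=2^{-1/(2\beta)}$, the radii $r_i=R(2-\tau^i)$, the telescoped inequality and the convergent geometric series with ratio $\theta=\tfrac12\tau^{-\beta}<1$ all check out, and the constant indeed depends only on $\beta$.

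The one point that needs repair is your safeguard for the residual term $(1/2)^k\phi(r_k)$. The truncation $\phi_N:=\min\{\phi,N\}$ does \emph{not} in general inherit the hypothesis: if $\phi(r_1)\geq N$ and $\phi(r_2)>N$, the truncated inequality would require $N\leq\tfrac12 N+\mathcal{A}+\mathcal{B}(r_2-r_1)^{-\beta}$, which can fail, so "replace $\phi$ by $\min\{\phi,N\}$" is not a legitimate reduction. Fortunately no extra hypothesis is needed beyond what the statement already gives (namely that $\phi$ is finite-valued, in particular $\phi(2R)<\infty$): apply the assumed inequality once more with $r_1=r_k$, $r_2=2R$, noting $2R-r_k=\tau^kR$, to get
\begin{equation*}
\Bigl(\tfrac12\Bigr)^{k}\phi(r_k)\leq\Bigl(\tfrac12\Bigr)^{k+1}\phi(2R)+\Bigl(\tfrac12\Bigr)^{k}\mathcal{A}+\theta^{k}\,\frac{\mathcal{B}}{R^{\beta}},
\end{equation*}
which tends to $0$ as $k\to\infty$ since $\theta<1$. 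With this substitution for your final step (or, alternatively, by simply adding boundedness of $\phi$ to the hypotheses, which is how the lemma is classically stated and which holds in the paper's application, where $\phi(r)=\||Du|_k+s\|_{L(\gamma,q)(Q_r)}$ is finite by the truncation of the gradient), your proof is complete.
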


\section{Problems with variable coefficients}
We collect in this section some results regarding the variational inequality \eqref{inequality}. First we show how it can be localized in time.
\begin{Lemma}[Localization]
Let $u\in K_0$ satisfy the variational inequality \eqref{inequality} for every $v \in K_0'$, with the obstacle $\psi$ and the data $F,f$ satisfying \eqref{int.ostacolo} to \eqref{int.data}. Then for every $\tau\in(0,T)$ and for every 
\begin{multline*}
 \tilde v\in \widetilde{K}_0':=\big\{\tilde v\in L^p(0,\tau;W^{1,p}_0(\Omega)):\\\tilde v\geq\psi\text{ a.e. in }\Omega_\tau\quad\text{and}\quad\partial_t\tilde v\in L^{p'}(0,\tau;W^{-1,p'}(\Omega))\big\},
\end{multline*}
(see \eqref{inclusion}), we have
\begin{multline}\label{inequality.localized}
\int_0^\tau\langle\partial_t \tilde v,\tilde v-u\rangle_{W^{-1,p}\times W^{1,p}_0}\dt+\int_{\Omega_\tau}\langle a(x,t,Du),D\tilde v-Du\rangle\dz\\
\geq-\frac12\int_{\Omega}|\tilde v(\cdot,0)-u_0|^2\dx+\int_{\Omega_\tau}\langle |F|^{p-2}F,D\tilde v-Du\rangle\dz\\+\int_{\Omega_\tau}f(\tilde v-u)\dz;
\end{multline}
we recall that $\Omega_\tau=\Omega\times(0,\tau)$.
\end{Lemma}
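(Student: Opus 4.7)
My plan is to use $\tilde v$ to construct an admissible global test function $v \in K_0'$ for the original inequality \eqref{inequality}, arranged so that substituting $v$ and restricting all integrals to the sub-interval $[0,\tau]$ yields the localized inequality \eqref{inequality.localized}, while the residual contributions over $(\tau,T]$ are controlled with the right sign.

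By \eqref{inclusion} applied to $\widetilde K_0'$, $\tilde v \in C([0,\tau]; L^2(\Omega))$, so the trace $\tilde v(\cdot,\tau)$ is a well-defined element of $L^2(\Omega)$. I would then extend $\tilde v$ to $\Omega_T$ by ``freezing'' it at time $\tau$, corrected upward by $\psi$ to enforce the obstacle constraint, taking for instance
\[
v(x,t) := \begin{cases} \tilde v(x,t), & t \in [0,\tau], \\ \max\bigl(\tilde v(x,\tau),\,\psi(x,t)\bigr), & t \in (\tau, T]. \end{cases}
\]
Since $\psi \leq 0$ on $\partial_{\rm lat}\Omega_T$ by \eqref{reg.ostacolo}, $\psi^+ \in \LWpz$; combining this with a standard $W^{1,p}_0(\Omega)$-approximation of the trace $\tilde v(\cdot,\tau)$ if needed, this $v$ lies in $\LWpz$. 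Its distributional time derivative coincides with $\partial_t\tilde v \in L^{p'}(0,\tau;W^{-1,p'}(\Omega))$ on $[0,\tau]$, while on $(\tau,T]$ it is pointwise dominated by $|\partial_t\psi| \in L^{p'}(\Omega_T)$, hence $\partial_t v \in \Ldual$ by the embedding $L^{p'}(\Omega) \hookrightarrow W^{-1,p'}(\Omega)$. Since also $v \geq \psi$ by construction, $v \in K_0'$ and can be used as a test function in \eqref{inequality}.

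Substituting $v$ into \eqref{inequality} and splitting each time integral at $\tau$, the parts over $[0,\tau]$ reproduce exactly the integrands appearing in \eqref{inequality.localized} because $v \equiv \tilde v$ there. The residual over $(\tau,T]$ consists of $\int_\tau^T\langle\partial_t v,v-u\rangle\,dt$, the elliptic term with $Dv - Du$, and the $F,f$ terms. I would show this residual has a sign that only strengthens the localized inequality, using the monotonicity \eqref{monotonicity.V}, the identity \eqref{lemma.1} applied to $v$ alone (which converts $\int_\tau^T \langle \partial_t v, v\rangle dt$ into the boundary contribution $\tfrac{1}{2}\|v(T)\|_{L^2}^2 - \tfrac{1}{2}\|v(\tau)\|_{L^2}^2$), and the twin constraints $v \geq \psi$ and $u \geq \psi$, which extract the correct sign from the obstacle.

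The main obstacle is that $u \in K_0$ is \emph{not} assumed to possess a time derivative in $\Ldual$, so classical integration by parts cannot be used to shift $\partial_t v$ onto $u$ in the residual time integral $\int_\tau^T\langle\partial_t v, u\rangle\,dt$. The key point is that this pairing is intrinsically well defined purely from the regularity of $v$; should one need more control over its sign, I would approximate $u$ by its Steklov averages $[u]_h$, which do satisfy $\partial_t[u]_h \in \Ldual$, apply \eqref{lemma.2} to the smoothed objects, and then pass to the limit $h \to 0$ exploiting the lower-semicontinuity of the $L^2$-norm and the fact that $[u]_h \to u$ in $\LWpz$. Together with the sign information from $v \geq \psi$ on $(\tau, T]$, this gives the desired \eqref{inequality.localized}.
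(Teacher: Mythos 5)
Your construction does not work. The crux of the proof is to choose the test function on $(\tau,T]$ to be (an approximation of) $u$ itself, so that the residual contributions over $(\tau,T]$ vanish in the limit; instead you freeze $\tilde v$ at time $\tau$ (corrected by $\psi$), and for that choice the residual
\[
\int_\tau^T\langle\partial_t v,v-u\rangle\dt+\int_{\Omega\times(\tau,T)}\big\langle a(x,t,Du)-|F|^{p-2}F,\,Dv-Du\big\rangle\dz-\int_{\Omega\times(\tau,T)}f(v-u)\dz
\]
has no reason to be nonpositive. In particular, monotonicity \eqref{monotonicity.V} is unavailable here: it compares $a(\cdot,\cdot,\xi_1)$ and $a(\cdot,\cdot,\xi_2)$, but you have no equation for $v$ on $(\tau,T]$, so $a(x,t,Dv)$ never enters and you cannot replace $a(x,t,Du)$ by a difference to which monotonicity applies. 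The $F$ and $f$ terms carry arbitrary signs, and the obstacle constraints $v,u\ge\psi$ give no information about $v-u$ or $Dv-Du$. Writing $\int_\tau^T\langle\partial_t v,v\rangle\dt$ as a boundary term via \eqref{lemma.1} does not help with the cross term $\int_\tau^T\langle\partial_t v,u\rangle\dt$ because $u$ has no time derivative in $\Ldual$, which is exactly the difficulty you acknowledge but do not resolve for your choice of $v$. There is also a secondary issue: the trace $\tilde v(\cdot,\tau)$ is only in $L^2(\Omega)$ by \eqref{inclusion}, not in $W^{1,p}_0(\Omega)$, so ``freezing'' it does not directly produce an element of $\LWpz$; the approximation you invoke to fix this could spoil the constraint $v\ge\psi$.

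The idea that repairs all of this, and is what the paper does, is to take for $t>\tau$ a function $u_h=\max\{\llbracket u\rrbracket_h,\psi\}$ built from the exponential-type mollification $\llbracket u\rrbracket_h$ of $u$, which stays above $\psi$, lies in $\LWpz$, has $\partial_t u_h\in\Ldual$, and converges to $u$ in $\LWp$ and $L^2(\OT)$. One glues $\tilde v$ on $[0,\tau]$ to $u_h$ on $[\tau,T]$ through a Lipschitz cut-off $\zeta_\epsilon$ (a sharp cut at $\tau$ is not admissible since the glued function would fail to have $\partial_t v\in\Ldual$), sends $\epsilon\searrow 0$ using \eqref{lemma.1}--\eqref{lemma.2} to kill the transition terms, and finally sends $h\searrow 0$; the residual over $(\tau,T]$ then disappears because $u_h\to u$, together with a $\limsup$ estimate for $\int_\tau^T\langle\partial_t u_h,u_h-u\rangle\dt$. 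Your suggestion of Steklov averages is in the right spirit but would also have to be truncated above $\psi$ and then shown to retain the needed time regularity; this is nontrivial and, more importantly, does not save the freezing construction, which is the wrong choice to begin with.
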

\begin{proof}
This proof is just technical, since we have to show how to appropriately choose a test function $v\in K_0'$ in \eqref{inequality} to get \eqref{inequality.localized}; in particular we want to choose $v=u$ in $\Omega_T\smallsetminus\Omega_\tau$ and this poses some difficulties, since we don't know whether $\partial_t u$ exists. Hence an appropriate approximation should be considered: we hence define, following \cite{ted}, for $h\in(0,T]$, $t\in(0,T]$ and  $u_0$ as in \eqref{initial.datum}, the mollification
\[
\llbracket u\rrbracket_h(\cdot,t):=e^{-\frac th}u_0(\cdot)+\frac1h\int_0^t e^{\frac{s-t}{h}}v(\cdot,s)\ds
\]
and moreover we take
\[
u_h:=\max\{\llbracket u\rrbracket_h,\psi\},
\]
being $\psi$ the obstacle. In \cite{ted}, see also \cite{BDM2, KLq} for other details, it is proved that $u_h\in K_0'$ and in particular 
\[
\partial_tu_h\in \Ldual\cap L^{\min\{2,p'\}}(\OT),
\] 
that $u_h\to u$ in $\LWp$ and in $L^2(\OT)$ as $h\to0$ and $u_h(\cdot,0)=u_0$ in the $L^2$ sense.

\vs

Now, for $h\in(0,T-\tau]$ take a decreasing Lipschitz cut-off function in time $\zeta\equiv \zeta_\epsilon\in W^{1,\infty}(\R)$ such that $0\leq\zeta_\epsilon\leq 1$, $\zeta_\epsilon\equiv1$ on $[0,\tau-\epsilon]$, $\zeta_\epsilon\equiv 0$ on $[\tau,T]$ and $ \zeta_\epsilon'=1/\epsilon$ in $(\tau-\epsilon,\tau)$ and use as test function in \eqref{inequality} 
\[
v\equiv v_{h,\epsilon}=\zeta_\epsilon\tilde v+(1-\zeta_\epsilon)u_h\in K_0'\qquad\text{in $\Omega_T$}.
\]
Notice indeed that since both $\tilde v$ and $u_h$ stay above $\psi$, then also $v$ does; moreover, $v_{h,\epsilon}\equiv \tilde v$ in $\Omega_{\tau-\epsilon}$, $v_{h,\epsilon}\equiv u_h$ in $\Omega_T\smallsetminus\Omega_\tau$. Hence the function $v_{h,\epsilon}$ can be used into \eqref{inequality} and this yields
\begin{multline}\label{long}
\int_0^{\tau-\epsilon}\langle\partial_t \tilde v,\tilde v-u\rangle_{W^{-1,p}\times W^{1,p}_0}\dt+\int_{\tau-\epsilon}^\tau\langle\partial_t v_{h,\epsilon},v_{h,\epsilon}-u\rangle_{W^{-1,p}\times W^{1,p}_0}\dt\\
+\int_\tau^T\langle\partial_t u_h,u_h-u\rangle_{W^{-1,p}\times W^{1,p}_0}\dt+\int_{\Omega_{\tau-\epsilon}}\langle a(x,t,Du),D\tilde v-Du\rangle\dz\\
+\int_{\Omega\times (\tau-\epsilon,\tau)}\langle a(x,t,Du),Dv_{h,\epsilon}-Du\rangle\dz\\+\int_{\Omega\times(\tau,T)}\langle a(x,t,Du),Du_h-Du\rangle\dz\\
\geq-\frac12\int_{\Omega}|\tilde v(\cdot,0)-u_0|^2\dx+\int_{\Omega_{\tau-\epsilon}}\langle |F|^{p-2}F,D\tilde v-Du\rangle\dz\\
+\int_{\Omega\times (\tau-\epsilon,\tau)}\langle |F|^{p-2}F,Dv_{h,\epsilon}-Du\rangle\dz\\+\int_{\Omega\times (\tau,T)}\langle |F|^{p-2}F,Du_h-Du\rangle\dz\\
 +\int_{\Omega_{\tau-\epsilon}}f(\tilde v-u)\dz +\int_{\Omega\times(\tau-\epsilon,\tau)}f(v_{h,\epsilon}-u)\dz +\int_{\Omega\times(\tau,T)}f(u_h-u)\dz.
\end{multline}
In the display above, first we want to let $\epsilon\searrow0$. The first and the fourth term on the left-hand side converge, respectively, to the corresponding ones over $(0,\tau)$ and $\Omega_\tau$; the same happens for the second and the fifth on the right-hand side. For the other ones, using the explicit expression for $v_{h,\epsilon}$ and triangle inequality, and also that $|\zeta_\epsilon|\equiv |\zeta_\epsilon(t)|\leq1$ yields
\begin{multline*}
\lim_{\epsilon\searrow0}\int_{\Omega\times (\tau-\epsilon,\tau)}\langle a(x,t,Du),Dv_{h,\epsilon}-Du\rangle\dz\\
\leq  \int_{\Omega\times (\tau-\epsilon,\tau)}\big(s^2+|Du|^2\big)^{\frac{p-1}{2}}\Big[|Du|+|Du_h|+|D\tilde v|\Big]\dz\xrightarrow[\epsilon\searrow0]{} 0;
\end{multline*}
similarly for the sum
\[
\int_{\Omega\times (\tau-\epsilon,\tau)}\langle |F|^{p-2}F,Dv_{h,\epsilon}-Du\rangle\dz+\int_{\Omega\times(\tau-\epsilon,\tau)}f(v_{h,\epsilon}-u)\dz\xrightarrow[\epsilon\searrow0]{} 0,
\]
Finally, the most problematic one: we split
\begin{align*}
\int_{\tau-\epsilon}^\tau&\langle\partial_t v_{h,\epsilon},v_{h,\epsilon}-u\rangle_{W^{-1,p}\times W^{1,p}_0}\dt \\
&=\int_{\tau-\epsilon}^\tau\langle\partial_t (v_{h,\epsilon}-u_h),v_{h,\epsilon}-u_h\rangle_{W^{-1,p}\times W^{1,p}_0}\dt\\
&\qquad+\int_{\tau-\epsilon}^\tau\langle\partial_t (v_{h,\epsilon}-u_h),u_h-u\rangle_{W^{-1,p}\times W^{1,p}_0}\dt\\
&\qquad+\int_{\tau-\epsilon}^\tau\langle\partial_t u_h,v_{h,\epsilon}-u_h\rangle_{W^{-1,p}\times W^{1,p}_0}\dt\\
&\qquad+\int_{\tau-\epsilon}^\tau\langle\partial_t u_h,u_h-u\rangle_{W^{-1,p}\times W^{1,p}_0}\dt=:I+II+III+IV.
\end{align*}
The first one is estimated as follows: taking into account that $v_{h,\epsilon}-u_h=\zeta_\epsilon(\tilde v-u_h)$,
\begin{multline*}
I=\int_{\tau-\epsilon}^\tau\frac12\frac{d}{dt}\int_\Omega \big|v_{h,\epsilon}(\cdot,t)-u_h(\cdot,t)\big|^2\dx\dt \\
=\frac12\int_\Omega \big[|\tilde v-u_h|^2\zeta_\epsilon^2\big](\cdot,\tau)\dx-\frac12\int_\Omega \big[|\tilde v-u_h|^2\zeta_\epsilon^2\big](\cdot,\tau-\epsilon)\dx\leq0
\end{multline*}
the first estimate follows by \eqref{lemma.1}, while the last one is due to the fact that $\zeta_\epsilon(\tau)=0$. For $III$, taking into account that $\zeta_\epsilon(t)$ is zero in $t=\tau$ and one in $t=\tau-\epsilon$, integrating by parts (in the sense specified by \eqref{lemma.2}) after recalling again \eqref{identification}:
\begin{multline*}
III=\int_{\Omega\times(\tau-\epsilon,\tau)}\langle\partial_t(v_{h,\epsilon}-u_h),u_h\rangle_{W^{-1,p}\times W^{1,p}_0}\dt\\-\int_{\Omega}\big[(\tilde v-u_h)u_h\big](\cdot,\tau-\epsilon)\dx.
\end{multline*}
Adding now $II$ and $III$ and recalling again that 
\[
-\partial_t(v_{h,\epsilon}-u_h)=-\zeta_\epsilon\partial_t(\tilde v-u_h)-\partial_t\zeta_\epsilon(\tilde v-u_h)=-\partial_t(\tilde v-u_h)\zeta_\epsilon+\frac{\tilde v-u_h}{\epsilon},
\]
we infer
\begin{align*}
II+III&=-\int_{\Omega\times(\tau-\epsilon,\tau)}\langle\partial_t(v_{h,\epsilon}-u_h),u\rangle_{W^{-1,p}\times W^{1,p}_0}\dt\\
&\hspace{4.5cm}-\int_{\Omega}\big[(\tilde v-u_h)u_h\big](\cdot,\tau-\epsilon)\dx\\
&= -\int_{\Omega\times(\tau-\epsilon,\tau)}\langle\partial_t(\tilde v-u_h),u\rangle_{W^{-1,p}\times W^{1,p}_0}\dt\\
&\quad+\aveint{\tau-\epsilon}{\tau}\int_{\Omega}(\tilde v-u_h)u\dx\dt-\int_{\Omega}\big[(\tilde v-u_h)u_h\big](\cdot,\tau-\epsilon)\dx\\
&\xrightarrow[\epsilon\to0]{} \int_{\Omega}\big[(\tilde v-u_h)u\big](\cdot,\tau)\dx-\int_{\Omega}\big[(\tilde v-u_h)u_h\big](\cdot,\tau)\dx,
\end{align*}
since $(\tilde v-u_h)u,(\tilde v-u_h)u_h\in C([0,T];L^2(\Omega))$. Finally we also have $IV\to 0$ as $\epsilon\searrow0$. Now, taking the limit $\epsilon\searrow0$ in \eqref{long}, we get
\begin{multline}\label{long2}
\int_0^\tau\langle\partial_t \tilde v,\tilde v-u\rangle_{W^{-1,p}\times W^{1,p}_0}\dt+\int_{\Omega}\big[(\tilde v-u_h)u\big](\cdot,\tau)\dx\\
-\int_{\Omega}\big[(\tilde v-u_h)u_h\big](\cdot,\tau)\dx+\int_\tau^T\langle\partial_t u_h,u_h-u\rangle_{W^{-1,p}\times W^{1,p}_0}\dt\\
+\int_{\Omega_\tau}\langle a(x,t,Du),D\tilde v-Du\rangle\dz+\int_{\Omega\times(\tau,T)}\langle a(x,t,Du),Du_h-Du\rangle\dz\\
\geq-\frac12\int_{\Omega}|\tilde v(\cdot,0)-u_0|^2\dx+\int_{\Omega_\tau}\langle |F|^{p-2}F,D\tilde v-Du\rangle\dz\\
+\int_{\Omega\times (\tau,T)}\langle |F|^{p-2}F,Du_h-Du\rangle\dz+\int_{\Omega_\tau}f(\tilde v-u)\dz \\
 +\int_{\Omega\times(\tau,T)}f(u_h-u)\dz.
\end{multline}
To conclude, we want to take the $\limsup$ as $h\searrow0$ in the previous inequality. Note that by the convergence of $u_h$ to $u$ in $L^p(0,T;W^{1,p}(\Omega))$. 
\[
\int_{\Omega\times(\tau,T)}\langle a(x,t,Du)-|F|^{p-2}F,Du_h-Du\rangle\dz+\int_{\Omega_\tau}f(\tilde v-u)\dz\xrightarrow[h\to0]{}0
\]
and moreover, by \eqref{identification} and (minor modifications of) \cite[Lemma 2.5]{BDM2} we know that
\begin{multline*}
 \limsup_{h\searrow0}\int_\tau^T\langle\partial_t u_h,u_h-u\rangle_{W^{-1,p}\times W^{1,p}_0}\dt\\=\limsup_{h\searrow0}\int_{\Omega\times(\tau,T)}\partial_t u_h(u_h-u)\dz\leq 0.
\end{multline*}
To conclude, 
\[
\limsup_{h\searrow0}\int_{\Omega}\big[(\tilde v-u_h)u\big](\cdot,\tau)\dx-\int_{\Omega}\big[(\tilde v-u_h)u_h\big](\cdot,\tau)\dx=0
\]
since $u_h\to u$ in $L^2(\OT)$. Indeed, up to a sub-sequence, $\|[u_h-u](\cdot,\tau)\|_{L^2(\Omega)}\to0$ for almost every $\tau\in[0,T]$; since $\tau\to\| [u_h-u](\cdot,\tau)\|_{L^2(\Omega)}$ is continuous over $[0,T]$, then convergence actually takes place everywhere. Putting all these informations into \eqref{long2} finally gives \eqref{inequality.localized}.
\end{proof}

We shall later need a higher integrability-type result for variational solutions to \eqref{inequality}; the following one has been proved by B\"ogelein and Scheven in \cite{BS}. We show the minor modifications that have to to be done with respect to their proof in order to get the following formulation.
\begin{Theorem}\label{hiint.BS}
Let $u\in L^p(0,T;W^{1,p}(\Omega))$ satisfy the variational inequality \eqref{inequality}, where the vector field satisfies \eqref{monotonicity.V} and
\[
	|a(x,t,\xi)| \leq L\big(s^2+|\xi|^2\big)^{\frac{p-1}{2}};
\]
moreover suppose that $F, |D\psi|\in L_\loc^{p(1+\sigma)}(\Omega_T)$ and $f,\partial_t\psi\in L_\loc^{p'(1+\sigma)}(\Omega_T)$ for some $\sigma>0$. Then there exists a constant $\varepsilon_1\in(0,\sigma]$ depending on $n,p,\nu,L,\sigma$, such that $|Du| \in L^{p(1+\varepsilon_1)}_\loc(\Omega_T)$ and moreover if $Q_{2R}^\lambda(z_0)\subset\Omega_T$ is a cylinder where the intrinsic bound
\begin{equation}\label{int.lemma}
\mean{Q_{2R}^\lambda}\big(s+|Du|\big)^p\dz+\biggl(\mean{Q_{2R}^\lambda}\Psi_{2R}^{p(1+\varepsilon_1)}\dz\biggr)^{1/(1+\varepsilon_1)}\leq \kappa\lambda^p
\end{equation}
holds for some constant $\kappa\geq1$, where $\Psi_R$ has been defined in \eqref{Psi}, then
\begin{equation}\label{hi.int.homo.var}
 \mean{Q_{R}^\lambda}\big(s+|Du|\big)^{p(1+\varepsilon_1)}\dz\leq c\,\lambda^{p(1+\varepsilon_1)}
\end{equation}
for a constant $c\equiv c(n,p,\nu,L,\sigma,\kappa)$.
\end{Theorem}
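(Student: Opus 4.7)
The plan is to reduce the statement to the higher integrability result of B\"ogelein and Scheven \cite{BS}, which is formulated on standard parabolic cylinders, by means of an intrinsic rescaling that converts $Q_{2R}^\lambda(z_0)$ into the fixed cylinder $Q_2$.

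Concretely, fix $Q_{2R}^\lambda(z_0)\subset\Omega_T$ satisfying \eqref{int.lemma} and, in the case $p\geq 2$, introduce the change of variables
\[
\tilde u(y,\tau):=\frac{u(x_0+Ry,\,t_0+\lambda^{2-p}R^2\tau)}{\lambda R},\qquad (y,\tau)\in Q_2,
\]
with the analogous definition in the singular range $p<2$ consistent with the shape of the cylinders in \eqref{cylinders}. Define correspondingly the rescaled obstacle $\tilde\psi$ and the rescaled data $\tilde F,\tilde f$ so that each term in \eqref{inequality} is preserved under the transformation. A direct computation shows that $\tilde u$ satisfies a variational inequality of the form \eqref{inequality} on $Q_2$ with the rescaled vector field
\[
\tilde a(y,\tau,\xi):=\lambda^{1-p}\,a\bigl(x_0+Ry,\,t_0+\lambda^{2-p}R^2\tau,\,\lambda\xi\bigr),
\]
which enjoys \eqref{monotonicity.V} and the growth bound in the statement with the same constants $\nu,L$ and with the degeneracy parameter $\tilde s=s/\lambda\leq 1$.

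Under this rescaling, the assumption \eqref{int.lemma} turns into
\[
\mean{Q_2}\bigl(\tilde s+|D\tilde u|\bigr)^p\dz+\biggl(\mean{Q_2}\tilde\Psi^{p(1+\varepsilon_1)}\dz\biggr)^{1/(1+\varepsilon_1)}\leq\kappa,
\]
where $\tilde\Psi$ is the analogue of $\Psi_{2R}$ for the rescaled obstacle and data; the fact that both terms rescale correctly without any residual $\lambda$-factor is precisely why the factor $R^{1/(p-1)}$ appears in front of $|f|^{1/(p-1)}$ and $|\partial_t\psi|^{1/(p-1)}$ in \eqref{Psi}. The higher integrability of \cite{BS} then applies on $Q_2$ with constants depending only on $n,p,\nu,L,\sigma,\kappa$ and produces a bound
\[
\mean{Q_1}\bigl(\tilde s+|D\tilde u|\bigr)^{p(1+\varepsilon_1)}\dz\leq c(n,p,\nu,L,\sigma,\kappa),
\]
for some $\varepsilon_1=\varepsilon_1(n,p,\nu,L,\sigma)\in(0,\sigma]$. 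Scaling back to $Q_R^\lambda(z_0)$ then yields exactly \eqref{hi.int.homo.var}.

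The main technical point is the bookkeeping: one has to verify that the transformed variational inequality still falls exactly within the class treated in \cite{BS}, that the structure constants $\nu,L$ are left untouched, that the parameter $\sigma$ controlling the integrability of $\tilde F,\tilde f,\tilde\psi$ is preserved, and that the rescaled data $\tilde F,\tilde f,D\tilde\psi,\partial_\tau\tilde\psi$ satisfy the integrability assumptions of Theorem \ref{hiint.BS} on $Q_2$ with a uniform constant. Once these rescaling identities are checked, no new PDE argument is needed: the reverse-H\"older estimate underlying the B\"ogelein--Scheven higher integrability (obtained via a Caccioppoli inequality tested against a cut-off that remains admissible above the obstacle, combined with a parabolic Gehring-type lemma) is transplanted to the intrinsic setting through the above dilation.
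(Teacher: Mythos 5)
Your proposal runs into a difficulty that the paper itself explicitly flags and circumvents in a different way. The rescaling you describe would produce the desired statement \emph{provided} you had a variational inequality for $\tilde u$ posed on $Q_2$ to which the B\"ogelein--Scheven result could be applied. But the variational inequality \eqref{inequality} is not a pointwise or local condition; it is a global inequality on $\Omega_T$, tested against $v\in K_0'$ with prescribed lateral boundary values $v=0$ and constraint $v\geq\psi$ on all of $\Omega_T$. In contrast to a PDE, one cannot simply ``restrict'' \eqref{inequality} to a subcylinder: to derive a local variational inequality on $Q_{2R}^\lambda(z_0)$ you would need to insert test functions that coincide with $u$ outside the cylinder, and this is blocked by the time-regularity requirement $\partial_t v\in\Ldual$ (note that $\partial_t u$ need not exist) and by the obstacle constraint, and in any case the localization lemma in Section 3 of the paper only works in time, not in space. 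Moreover, even if one had such a local inequality, the rescaled map $\tilde u$ would carry arbitrary (unknown) lateral boundary values on $\partial_\Ph Q_2$, whereas the framework of \cite{BS} is set up for solutions with zero lateral boundary data. The paper states this exact obstruction at the start of its proof of Theorem \ref{hiint.BS}: ``We cannot prove the local estimate~\eqref{hi.int.homo.var} using the rescaling argument employed in \cite[Lemma 3--4]{AM07}, since we cannot localize \eqref{inequality}.''

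What the paper actually does is \emph{not} rescale: it re-runs the Calder\'on--Zygmund covering argument of \cite[Section 4]{BS} directly, with the stopping-time procedure and the operator $CZ(Q_\varrho^\mu(z))$ carried out on intrinsic cylinders $Q_\varrho^\mu(z)\subset Q_{2R}^\lambda$, checking that the geometric inclusions and the exit-time inequality still hold for either form of intrinsic cylinder in \eqref{cylinders}, and observing that the reverse H\"older inequality \cite[Lemma 3.1]{BS} transplants with minor changes (keeping the $R$-dependent weight in $\Psi_R$). It then uses a truncation argument and the iteration Lemma \ref{it.lemma}. Your scaling observations about why the factor $R^{1/(p-1)}$ in $\Psi_R$ makes the hypothesis \eqref{int.lemma} dimensionally coherent are correct, and they do reflect the heuristic behind the proof, but they do not substitute for the missing localization step. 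If you want to salvage the rescaling route you would first have to establish a genuinely local formulation of the variational inequality (or a comparison/penalization argument replacing it), which is precisely the difficulty the paper avoids by arguing directly.
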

\begin{proof}
We cannot prove the local estimate \eqref{hi.int.homo.var} using the rescaling argument employed in \cite[Lemma 3-4]{AM07}, 
since we cannot localize \eqref{inequality}: assumptions on the boundary data in \cite{BS} are not usually satisfied locally by the solution we want to rescale. Therefore, rather than facing a technical regularization process, we prefer to proceed in a direct way by only showing the modifications to be done in the proof of \cite[Lemma 4.1]{BS}. In particular we want here to show that, under the assumptions of the Theorem, if $Q_{2R}^\lambda\equiv Q_{2R}^\lambda(z_0)\subset\Omega_T$, then the following estimate holds:
\begin{multline}\label{ppp}
\mean{Q_R^\lambda}\big(s+|Du|\big)^{p(1+\varepsilon)}\dz\leq c\,\lambda^{p(1+\varepsilon)}\\+c\,\lambda^{(1-d)p\varepsilon}\biggl(\mean{Q_{2R}^\lambda}\Big[\big(s+|Du|\big)^p+\Psi^p_{2R}\Big]\dz\biggr)^{1+\varepsilon d}+c\,\mean{Q_{2R}^\lambda}\Psi_{2R}^{p(1+\varepsilon)}\dz
\end{multline}
for all $\varepsilon\in(0,\varepsilon_1]$ and some constant $c$, as in the statement, with the scaling deficit defined in \eqref{def-d}. If moreover \eqref{int.lemma} holds, from the previous display it is immediate to get \eqref{hi.int.homo.var}. Since the procedure we are going to implement is very similar to that we shall describe in detail in Section \ref{proof.section}, we shall be very brief and we shall also write the major points, clearly for the arguments which do not need modifications.


\vspace{3mm}

We fix $Q_{2R}^\lambda\subset \Omega$, in both cases $p\geq2$ and $p<2$, and we consider the nested cylinders 
\[
Q_R^\lambda\subset Q_{r_1}^\lambda\subset Q_{r_2}^\lambda\subset Q_{2R}^\lambda,\qquad\text{with} \qquad R\leq r_1<r_2\leq2R.
\]
We also fix the quantity 
\[
\mu_0^{\frac pd}:=\lambda^{(1-d)\frac pd}\biggl(\mean{Q_{2R}^\lambda}\Big[\big(s+|Du|\big)^p+\Psi_{2R}^p\Big]\dz+\lambda^p\biggr)\geq\lambda^{\frac pd}
\]
and we consider points $z\in Q_{r_1}^\lambda$ and cylinders $Q_\varrho^\mu(z)$. Note that in the case $p<2$ we are considering here cylinders as defined in \eqref{cylinders}, differently from \cite{BS}; we shall however show the modification that should be done, taking into account that in any case $Q_\varrho^\mu(z)\subset Q_\varrho(z)$, since we are going to consider $\mu\geq1$. Notice that also our notation is slightly different from that of \cite{BS}: we indeed denote with $Q_\varrho^\mu(z)$ the (intrinsic) cylinders which play the role of the $Q_s^{(\lambda)}(z_0)$ in \cite[Section 4]{BS}. We consider here
\[
\mu>  B\mu_0, \quad\text{where}\quad B^{\frac pd}:=\biggl(\frac{160R}{r_2-r_1}\biggr)^N;\qquad \frac{r_2-r_1}{80}\leq\varrho\leq r_2-r_1
\]
and the important point is to note that, for such radii $\varrho$, $Q_\varrho^\mu(z)\subset Q_{2R}^\lambda$ if $z\in Q_{R}^\lambda$ and $\mu\geq\lambda$, since $B\geq1$. Hence, defining the operator
\[
CZ\big(Q_\varrho^\mu(z)\big):=\mean{Q_\varrho^\mu(z)}\Big[\big(s+|Du|\big)^p+\Psi_{2R}^p\Big]\dz
\]
we can estimate as in \cite[Step 1, pag. 951]{BS}, enlarging the domain of integration
\[
CZ\big(Q_\varrho^\mu(z)\big)\leq\frac{|Q_{2R}^\lambda|}{|Q_\varrho^\mu(z)|}\lambda^{(d-1)\frac pd}\mu_0^{\frac pd}< \frac{|Q_{2R}^\lambda|}{|Q_\varrho^\mu(z)|}\lambda^{(d-1)\frac pd}\mu^{\frac pd}B^{-\frac pd}.
\]
In both the cases $p\geq2$ and $p<2$ the right-hand side is bounded by $\mu^p$: indeed when $p\geq2$
\[
\frac{|Q_{2R}^\lambda|}{|Q_\varrho^\mu(z)|}\lambda^{(d-1)\frac pd}\mu^{\frac pd}B^{-\frac pd}=\Bigl(\frac\mu\lambda\Bigr)^{p-2}\Bigl(\frac{2R}{\varrho}\Bigr)^N\lambda^{p-2}\mu^2\biggl(\frac{160R}{r_2-r_1}\biggr)^{-N}\leq \mu^p
\]
recalling that $d=p/2$ in this case and that $1/\varrho\leq 80/(r_2-r_1)$; if $p<2$
\begin{multline*}
\frac{|Q_{2R}^\lambda|}{|Q_\varrho^\mu(z)|}\lambda^{(d-1)\frac pd}\mu^{\frac pd}B^{-\frac pd}\\=\Bigl(\frac\mu\lambda\Bigr)^{\frac{2-p}{2}n}\Bigl(\frac{2R}{\varrho}\Bigr)^N\lambda^{-\frac{p-2}{2}n}\mu^{p\frac{n+2}{n}-n}\biggl(\frac{160R}{r_2-r_1}\biggr)^{-N}
 \end{multline*}
and the last quantity is again bounded by $\mu^p$; recall that now $d=2p/[p(n+2)-2n]$. At this point the proof continues as in \cite{BS}: if $|Du(z)|+s>\mu$, then by Lebesgue's differentiation Theorem we have that $CZ(Q_\varrho^\mu(z))>\mu$ for small radii $0<\varrho\ll1$ and by absolute continuity we find a critical radius $\varrho_z< (r_2-r_1)/80$ such that $CZ(Q_{\varrho_z}^\mu(z))=\mu$. Note again that $Q_{80\varrho_z}^\mu(z)\subset Q_{2R}^\lambda$ and we slightly changed the super-level sets in play. Now the proof goes on exactly as after equation (4.8) in \cite{BS}, just keeping into account that there the $\Psi$ function does not include the radius $R$; this is to say, calling $\widetilde \Psi$ the function therein appearing, that $\widetilde \Psi=\Psi_1$, where $\Psi_R$ has been defined in \eqref{Psi}. This change, on the other hand, does not prevent to have the reverse H\"older's inequality of \cite[Lemma 3.1]{BS} also in our setting: that is,
\begin{multline*}
\mean{Q_{\varrho_z}^\mu(z)}(|Du|+s)^p\dz\\\leq c\,\biggl(\mean{Q_{2\varrho_z}^\mu(z)}(|Du|+s)^q\dz\biggr)^{\frac pq}+c\,\mean{Q_{8\varrho_z}^\mu(z)}\Psi_{8\varrho_z}^p\dz, 
\end{multline*}
where $q\equiv q(n,p)<p$ and $c\equiv c(n,p,\nu,L)$. Indeed, at a certain point (see in particular the estimates after (3.11)), in \cite{BS} the authors estimate $\Psi_{8\varrho_z}\leq \Psi_{8}=c(p)\widetilde \Psi$ since their radii $\varrho_z$ are smaller than one. At this point the proof, that is mostly algebraic and does not take into account the different expression for the cylinders we have, goes exactly as in \cite{BS} until the end of Section 4, once taking into account the aforementioned different meaning of the quantities into play; we have just to stress that, at a certain point of the proof, after the covering argument, we have to pointwise estimate $\Psi_{8\varrho_z}\leq \Psi_{2R}$. Hence, the application of Lemma \ref{it.lemma}, together with a truncation argument similar to that we are going to use at the end of Paragraph \ref{level.set}, leads to
\begin{multline*}
\mean{Q_R^\lambda}\big(s+|Du|\big)^{p(1+\varepsilon)}\dz\\\leq c\,\biggl(\mu_0^{\varepsilon p}\mean{Q_{2R}^\lambda}\big(s+|Du|\big)^p\dz+\mean{Q_{2R}^\lambda}\Psi^{p(1+\varepsilon)}_{2R}\dz\biggr)
 \end{multline*}
for any $\varepsilon\leq\varepsilon_1$, $\varepsilon_1$ described in \cite[Page 957]{BS}. Recalling the definition of $\mu_0$ it is immediate now to see that \eqref{ppp} follows. Finally note that taking $\lambda=1$ gives exactly back the result and the proof of \cite{BS}.
\end{proof}

\section{The proof of the Theorem}\label{proof.section}
Fix $Q_R(z_0)$ as in the statement of the theorem, such that $Q_{2R}(z_0)\subset\OT$ and $2R\leq R_0$; at this point of the proof we fix $R_0\equiv 1$, but in a subsequent step we shall reduce it in order to satisfy certain smallness conditions and this will cause the dependence stated in the theorem. For $d\geq1$ defined in \eqref{def-d} and $\Psi$ given by \eqref{Psi}, and for $M\geq1$ to be fixed later (only depending on $n,p,\nu,L,\gamma$ and possibily $q$), define the quantity
\begin{multline}\label{lambda0}
\lambda_0:={\biggl(\mean{Q_{2R}(z_0)}\big(|Du|+s\big)^p\dz\biggr)}^{\frac dp}\\+M^d{\biggl(\mean{Q_{2R}(z_0)}(\Psi_{2R}+s)^{\eta}\dz\biggr)}^{\frac d\eta}+1. 
\end{multline}
where $\eta:=p(1+\varepsilon_1)$, $\varepsilon_1\equiv \varepsilon_1(n,p,\nu,L,\gamma)$ being the higher integrability exponent given by Theorem \ref{hiint.BS} for the choice $\sigma=(\gamma-p)/(2p)$ (that yields $p(1+\sigma)=(p+\gamma)/2$); note indeed that $\Psi_{2R}\in L^{p(1+\sigma)}_\loc(\Omega_T)\subset L^{p(1+\varepsilon_1)}_\loc(\Omega_T)$ by \eqref{def-Psi} and the facts described in Paragraph \ref{Lorentz} about inclusions between Lorentz spaces. Notice moreover that this choice fixes a little imprecision in \cite{BL}; this value should replace the not correct one \cite{BL}; on the other hand, the whole proof does not require essentially any other change.

\vs

Consider now two intermediate radii $r_1,r_2$ such that $R\leq r_1<r_2\leq 2R$ and consider
\begin{equation}\label{Bradi}
 B:=2^d\Bigl(\frac{80R}{r_2-r_1}\Bigr)^{\frac Np d},\qquad\qquad \frac{r_2-r_1}{40}\leq r\leq r_2-r_1.
\end{equation}
To begin, we prove that for points $z\in Q_{r_1}$, levels $\lambda> B\lambda_0$ and radii as in \eqref{Bradi}, we have
\begin{multline}\label{sabove}
CZ\big(Q_r^\lambda(z)\big):={\biggl(\mean{Q_r^\lambda(z)}\big(|Du|+s\big)^p\dz\biggr)}^{\frac 1p}\\+M{\biggl(\mean{Q_r^\lambda(z)}\big(\Psi_{2R}+s\big)^{\eta}\dz\biggr)}^{\frac 1{\eta}}<\lambda. 
\end{multline}
Indeed, enlarging the domain of integration (notice that for cylinders as those we consider, we have $Q_r^\lambda(z)\subset Q_r(z)\subset Q_{2R}$ and hence $|Q_r^\lambda(z)|\leq|Q_{2R}|$) we infer, since $\lambda> B\lambda_0$
\begin{equation}\label{plugin}
CZ\big(Q_r^\lambda(z)\big)\leq 2\biggl[\frac{|Q_{2R}|}{|Q_r^\lambda(z)|}\biggr]^{\frac1p}\lambda_0^\frac 1d\leq 2\biggl[\frac{|Q_{2R}|}{|Q_r^\lambda(z)|}\biggr]^{\frac1p}B^{-\frac1d}\lambda^\frac 1d.
\end{equation}
Now in the case $p\geq2$ we estimate, recalling the definition of $d$
\[
\frac{|Q_{2R}|}{|Q_r^\lambda(z)|}\leq\lambda^{p-2}\Bigl(\frac{2R}{r}\Bigr)^N\leq \lambda^{p(1-\frac1d)}\Bigl(\frac{80R}{r_2-r_1}\Bigr)^N,
\]
while if $p<2$ we make the necessary changes, but we have the same result:
\[
\frac{|Q_{2R}|}{|Q_r^\lambda(z)|}\leq\lambda^\frac{(2-p)n}{2}\Bigl(\frac{2R}{r}\Bigr)^N\leq \lambda^{p(1-\frac1d)}\Bigl(\frac{80R}{r_2-r_1}\Bigr)^N.
\]
Hence, plugging these two estimates into \eqref{plugin}, depending clearly on the value of $p$, one immediately sees that \eqref{sabove} holds. On the other hand, if we consider points
\begin{multline*}
\bar z\in E(\lambda,Q_{r_1})\\:=\big\{z\in Q_{r_1}: \text{$z$ is a Lebesgue's point of $Du$ and }|Du(z)|+s>\lambda\big\}, 
\end{multline*}
for $\lambda>0$, by Lebesgue differentiation Theorem we have
\begin{equation}\label{bbelow}
\lim_{r\searrow0}CZ\big(Q_r^\lambda(\bar z)\big)\geq\lim_{r\searrow0}\biggl(\mean{Q_r^\lambda(\bar z)}\big(|Du|+s\big)^p\dz\biggr)^{\frac1p}=|Du(\bar z)|+s>\lambda 
\end{equation}
and therefore the converse inequality holds true. Hence, taking the previous two facts \eqref{sabove} and \eqref{bbelow} into account, we get from the absolute continuity of the integral that for each $\lambda> B\lambda_0$ and for every $\bar z\in E(\lambda,Q_{r_1})$ there exists a maximal radius $r_{\bar z}$ such that 
\begin{equation}\label{exit.cylinder}
{\biggl(\mean{Q_{r_{\bar z}}^\lambda(\bar z)}\big(|Du|+s\big)^p\dz\biggr)}^{\frac 1p}+M{\biggl(\mean{Q_{r_{\bar z}}^\lambda(\bar z)}(\Psi_{2R}+s)^{\eta}\dz\biggr)}^{\frac 1{\eta}}=\lambda; 
\end{equation}
we use the word maximal in the sense that for any $r\in(r_{\bar z},r_2-r_1]$,  $CZ(Q^\lambda_r(\bar z))<\lambda$. Note that by \eqref{sabove} we have $r_{\bar z}<(r_2-r_1)/40$ and therefore $Q^\lambda_{40r_{\bar z}}(\bar z)\subset Q_{r_2}$ since in particular $\bar z\in Q_{r_1}$. Moreover, we have 
\begin{multline}\label{intrinsic.big}
\frac{\lambda}{40^{\frac Np}}\leq \biggl(\mean{Q_{40r_{\bar z}}^\lambda(\bar z)}\big(|Du|+s\big)^p\dz\biggr)^{\frac1p}\\
+M\biggl(\mean{Q_{40r_{\bar z}}^\lambda(\bar z)}(\Psi_{2R}+s)^{\eta}\dz\biggr)^{\frac1{\eta}}\leq \lambda
\end{multline}
the left-hand side inequality reducing the integration domain to $Q_{r_{\bar z}}^\lambda(\bar z)$, the right-hand side from the aforementioned maximality of the radius $r_{\bar z}$.

\vs

We stress again that for the remainder of the proof, when dealing with cylinders of the type $Q_R^\lambda$ we shall implicitly understand which kind of parabolic cylinders we are using, depending on the value of $p$.

\subsection{A density estimate}

Fix here $\lambda>B\lambda_0$ and single out one of the cylinders previously chosen, say $Q\equiv Q^\lambda_{r_{\bar z}}(\bar z)$, such that $CZ(Q)=\lambda$. We must be in one of the following two cases:
\begin{equation}\label{split}
\Bigl(\frac{\lambda}{2}\Bigr)^p\leq \mean{Q}\big(|Du|+s\big)^p\dz\quad\text{or}\quad\Bigl(\frac{\lambda}{2}\Bigr)^{\eta}\leq M^{\eta}\mean{Q}(\Psi_{2R}+s)^{\eta}\dz. 
\end{equation}
In the case the first alternative holds, we split the average in the following way:
\begin{align}\label{part.fist.split}
&\Bigl(\frac{\lambda}{2}\Bigr)^p\leq \mean{Q}\big(|Du|+s\big)^p\dz\\
&\le \frac{|Q\smallsetminus E(\lambda/4,Q_{r_2})|}{|Q|}\Big(\frac\lambda4\Big)^p+\frac{1}{|Q|}\int_{Q\cap E(\lambda/4,Q_{r_2})}\!\!\big(|Du|+s\big)^p\dz\notag\\
&\leq \Big(\frac\lambda4\Big)^{p}+\biggl(\frac{|Q\cap E(\lambda/4,Q_{r_2})|}{|Q|}\biggr)^{\frac{\varepsilon_1}{1+\varepsilon_1}}\biggl(\mean{Q}\big(|Du|+s\big)^{p(1+\varepsilon_1)}\dz\biggr)^{\frac{1}{1+\varepsilon_1}},\notag
\end{align}
being $\varepsilon_1$ the higher integrability exponent of Theorem \ref{hiint.BS}. Thus, taking into account \eqref{exit.cylinder}, we have a constant depending on $n,p,\nu,L,\gamma$ {\em but not on $M$} such that
\[
\mean{Q}\big(|Du|+s\big)^{p(1+\varepsilon_1)}\dx\leq c\,\lambda^{p(1+\varepsilon_1)}.
\]
Therefore plugging this estimate in \eqref{part.fist.split}, reabsorbing $(\lambda/4)^p$, dividing by $\lambda^p$ and recalling that $Q=Q^\lambda_{r_{\bar z}}(\bar z)$ yields
\begin{equation}\label{meas.1}
 |Q^\lambda_{r_{\bar z}}(\bar z)|\leq c\,\big|Q^\lambda_{r_{\bar z}}(\bar z)\cap E(\lambda/4,Q_{r_2})\big|,
\end{equation}
with the constant depending on $n,p,\nu,L,\gamma$.

\vs

If, on the other hand, \eqref{split}$_2$ holds, take
\begin{equation}\label{choices}
\varsigma=\frac{1}{4M};
\end{equation}
then using Fubini's Theorem and splitting the integral
\begin{align*}
\Bigl(\frac{\lambda}{2M}\Bigr)^{\eta}&\leq\mean{Q}(\Psi_{2R}+s)^{\eta}\dz\\
&=\frac{{\eta}}{|Q|}\int_0^\infty\mu^{\eta}\big|\{z\in Q:\Psi_{2R}(z)+s>\mu \}\big|\frac{d\mu}{\mu}\\
&\leq (\varsigma\lambda)^{\eta}+\frac{{\eta}}{|Q|}\int_{\varsigma\lambda}^\infty\mu^{\eta}\big|\{z\in Q:\Psi_{2R}(z)+s>\mu \}\big|\frac{d\mu}{\mu}.
 \end{align*}
The choice of $\varsigma$ allows to reabsorb the first term on the left-hand side and to infer, dividing by $\lambda^{\eta}$ and recalling the expression for $\varsigma$
\[
|Q|\leq \frac{{\eta}}{(\varsigma\lambda)^{\eta}}\int_{\varsigma\lambda}^\infty\mu^{\eta}\big|\{z\in Q:\Psi_{2R}(z)+s>\mu \}\big|\frac{d\mu}{\mu}.
\]
Merging the estimate in the last display with \eqref{meas.1} gives
\begin{multline}\label{discuss}
 |Q^\lambda_{r_{\bar z}}(\bar z)|\leq c\, \big|Q^\lambda_{r_{\bar z}}(\bar z)\cap E(\lambda/4,Q_{r_2})\big|\\+ \frac{c}{(\varsigma\lambda)^{\eta}}\int_{\varsigma\lambda}^\infty\mu^{\eta}\big|\{z\in Q^\lambda_{r_{\bar z}}(\bar z):\Psi_{2R}(z)+s>\mu \}\big|\frac{d\mu}{\mu},
\end{multline}
with $c$ depending on $n,p,\nu,L,\gamma$ but not on $M$.

\subsection{Comparisons} \label{comparion.par}
We start with the solution $u\in K_0$ to \eqref{inequality} and a cylinder $40Q\equiv Q_{40r_{\bar z}}^\lambda({\bar z})$, ${\bar z}=({\bar x},{\bar t})$, defined as above, for $\lambda> B\lambda_0$ and ${\bar z}\in E(\lambda,Q_{r_1})$; hence we have $40Q\subset Q_{r_2}$ and that \eqref{intrinsic.big} holds.

\vs

{\em First comparison.} We want to build an admissible comparison function $v\in K_0'$ to be used in the variational inequality \eqref{inequality}, and to do this we shall solve an appropriate Cauchy-Dirichlet parabolic problem. We shall write $Q=40I\times 40B$ independently of the value of $p$; therefore for the meaning of $40I$ and $40B$ we refer to \eqref{cylinders}. Take the solution
\[
v\in u+L^p(40I;W^{1,p}_0(40B))\cap C^0(\overline{40I};L^2(40B))
\]
to
\begin{equation}\label{f.comparison}
 \begin{cases}
 \partial_tv-\divergence a(x,t,Dv)= \partial_t\psi-\divergence a(x,t,D\psi)&\text{in $40Q$,}\\[3pt]
v= u&\text{on $\partial_\Ph(40Q)$,}
\end{cases}
\end{equation}
where $\psi\in $ is the obstacle; existence of such a function is a classic fact since the right-hand side belongs to $L^{p'}(40Q)$ by \eqref{int.ostacolo}-\eqref{reg.ostacolo} and the boundary value $u$ belongs to the energy space; moreover we clearly have, by difference and \eqref{int.ostacolo}-\eqref{reg.ostacolo}, that
\[
\partial_tv\in L^{p'}(40I;W^{-1,p'}(40B))
\]
in the following sense: for $\varphi \in W^{1,p}_0(40B)$ and for a.e. $t\in 40I$
\begin{multline*}
\langle\partial_tv(\cdot,t),\varphi\rangle_{W^{-1,p'}\times W^{1,p}_0}\\=\int_{40B}\langle a(x,t,D\psi)-a(x,t,Dv),D\varphi\rangle\dx+\int_{40B}\partial_t\psi\varphi\dx;
\end{multline*}
 moreover the map $t\mapsto\langle\partial_tv(\cdot,t),\varphi\rangle_{W^{-1,p'}\times W^{1,p}_0}$ belongs to $ L^{p'}(40I)$. By comparison Lemma \ref{comparison} we infer that $v\geq\psi$ on $40Q$, since $v=u\geq\psi$ on $\partial_\Ph(40Q)$. If we now extend $v$ to $\Omega_{\bar t}$ (keeping denoted it by $v$) by setting $v=u$ in $\Omega_{\bar t}\smallsetminus 40Q$, this gives an admissible test function for the localized inequality \eqref{inequality.localized}, so that we have, after changing sign
\begin{multline}\label{partic}
-\int_{40I}\langle\partial_t v,v-u\rangle_{W^{-1,p}\times W^{1,p}_0}\dt-\int_{40Q}\langle a(x,t,Du),Dv-Du\rangle\dz\\\leq-\int_{40Q}\langle |F|^{p-2}F,Dv-Du\rangle\dz-\int_{40Q}f(v-u)\dz,
\end{multline}
taking into account that the extension of $v$ agrees with $u$ outside $40Q$ and hence also the term $\int_{40B}|v(\cdot,0)-u_0|^2\dx$ disappears.
On the other hand, using as test function $\varphi=v-u\in W^{1,p}_0(40B)$ in the weak formulation of \eqref{f.comparison}$_1$ and adding it to \eqref{partic} we get
\begin{multline*}
\mean{40Q}\langle a(x,t,Dv)-a(x,t,Du),Dv-Du\rangle\dz\\\leq\mean{40Q}\langle a(x,t,D\psi)-|F|^{p-2}F,Dv-Du\rangle\dz+\mean{40Q}(\partial_t\psi-f)(v-u)\dz,
\end{multline*}
after taking averages. Using now \eqref{monotonicity.V} to estimate the left-hand side from below we deduce
\begin{multline}\label{f.estimate.V}
\mean{40Q}\big|V_s(Dv)-V_s(Du)\big|^2\dz\\\leq c\mean{40Q}\langle a(x,t,D\psi)-|F|^{p-2}F,Dv-Du\rangle\dz\\+c\mean{40Q}(\partial_t\psi-f)(v-u)\dz
\end{multline}
with $c\equiv c(n,p,\nu)$. At this point we shall use the previous inequality in two directions. First we use Lemma \ref{fst.lemma} to get the following bound for the energy of $Dv$:
\begin{align}\label{ex.energy}
&\mean{40Q}\big(|Dv|+s\big)^p\dz\leq c\mean{40Q}\big(|Du|+s\big)^p\dz\notag\\
&+ c\mean{40Q}\langle a(x,t,D\psi)-|F|^{p-2}F,Dv-Du\rangle\dz\notag\\
&\hspace{5cm}+c\mean{40Q}(\partial_t\psi-f)(v-u)\dz,
\end{align}
for a constant depending on $n,p,\nu$; we call the three terms on the right-hand side $c\,I_1,c\,II_1$ and $c\,III_1$. We simply estimate $I_1\leq c\,\lambda^p$ by \eqref{intrinsic.big}; also the estimate for the remaining terms are easy. Indeed using the growth condition \eqref{assumptions}$_2$ and Young's inequality with $\varepsilon\in(0,1)$ to be chosen, we have
\begin{align}\label{anc.recall}
II_1&\leq c\mean{40Q}\Bigl[ \big(s^2+|D\psi|^2\big)^{\frac{p-1}{2}}+|F|^{p-1}\Bigr]\big(|Du|+|Dv|\big)\dz\\
&\leq c\,I_1+c\,\varepsilon^{-p'}\mean{40Q}\Bigl[ |D\psi|^p+|F|^p+s^p\Bigr]\dz+\tilde c\,\varepsilon^p\mean{40Q}|Dv|^p\dz\notag
\end{align}
$\tilde c$ depending on $n,p,\nu,L$. In the same way, using Poincar\'e's inequality slicewise (note that $(v-u)(\cdot,t)\in W^{1,p}_0(40B)$ for a.e. $t\in 40I$) we estimate, again with Young's inequality, for $\varepsilon$ as above and triangle inequality
\begin{multline}\label{anc.recall.2}
III_1\leq  c\,\varepsilon^{-p'}r_{\bar z}^{p'}\mean{40Q}\big({|\partial_t\psi|}^{p'}+{|f|}^{p'})\dz\\+\bar c\,\varepsilon^p\mean{40Q}\big({|Du|}^p+{|Dv|}^p\big)\dz .
\end{multline}
At this point we choose $\varepsilon$ small enough, so that $\varepsilon^p(\tilde c+\bar c)=1/2$; reabsorbing the right-hand side energy of $Dv$, recalling that $r_{\bar z}\leq2R$ and the definition of $\Psi_{2R}$ in \eqref{Psi} and also \eqref{intrinsic.big} and performing simple algebraic manipulation, gives
\begin{equation}\label{energy.Dv}
\mean{40Q}\big(|Dv|+s\big)^p\dz\leq c\,\lambda^p+c\mean{40Q}\Psi_{2R}^p\dz\leq \big[c+\frac cM\big]\lambda^p\leq c\lambda^p
\end{equation}
since $M\geq1$, for a constant depending only on $n,p,\nu,L$. The second estimate we shall deduce from \eqref{f.estimate.V} is the following comparison one: up to a constant, we still have the two terms $II_1,III_1$ on the right-hand side; that is
\[
\mean{40Q}\big|V_s(Du)-V_s(Dv)\big|^2\dz\leq \frac{1}{c_\ell}[II_1+III_1];
\]
here we simply use Young's inequality in a different way (we don't have to reabsorb the energy of $Dv$). We have, recalling \eqref{anc.recall}, using H\"older's inequality, \eqref{intrinsic.big} and \eqref{energy.Dv}
\begin{align*}
II_1&\leq \frac{c(p)}{M^{p-1}}\biggl(M^p\mean{40Q}\big(|D\psi|^p+|F|^p+s^p\big)\dz\biggr)^{\frac{1}{p'}}\times\\
&\hspace{5cm}\times\biggl(\mean{40Q}\big(|Du|+|Dv|\big)^p\dz\biggr)^\frac1p \\
&\leq\frac{c}{M^{p-1}}\lambda^{\frac{p}{p'}+1}=\frac{c}{M^{p-1}}\lambda^p.
\end{align*}
Similarly we can bound $III_1$ by
\begin{multline*}
\frac{c}{M^{p-1}}\biggl(M^pr_{\bar z}^{p'}\mean{40Q}\big(|\partial_t\psi|+|f|\big)^{p'}\dz\biggr)^{\frac{1}{p'}}\biggl(\mean{40Q}|Dv-Du|^p\dz\biggr)^{\frac1p}\\
\leq \frac{c}{M^{p-1}}\lambda^p; 
\end{multline*}
hence, all in all we have
\begin{equation}\label{f.compa.V}
\mean{40Q}\big|V_s(Du)-V_s(Dv)\big|^2\dz\leq \frac{c}{M^{p-1}}\lambda^p
\end{equation}
for a constant $c$ depending only on $n,p,\nu,L$; this is the first comparison estimate we were looking for.

\vs

{\em Second comparison.} 
On a smaller cylinder now we want to consider the function agreeing with $v$ on the parabolic boundary but solving an homogeneous parabolic equation. Therefore, with the same notation introduced in the previous step, we consider the solution 
\[
\tilde v\in u+L^p(20I;W^{1,p}_0(20B))\cap C^0(\overline{20I};L^2(20B))
\]
to the Cauchy-Dirichlet problem
\begin{equation}\label{s.comparison}
 \begin{cases}
 \partial_t\tilde v-\divergence a(x,t,D\tilde v)= 0&\text{in $20Q$,}\\[3pt]
\tilde v= v&\text{on $\partial_\Ph(20Q)$;}
\end{cases}
\end{equation}
also here the existence is guaranteed by classic results. We test now the weak formulation of \eqref{f.comparison} with $v-\tilde v$ as test function, extended to zero in $40Q\smallsetminus20Q$ and that of \eqref{s.comparison}, tested with $v-\tilde v$; notice that in both cases a regularization in time via Steklov averaging is needed; however we shall proceed formally, here, subtracting the second from the first one (we could also follow \cite[Lemma 2.1]{BDM2}, in a more abstract setting). We hence have
\begin{multline*}
\mean{20Q}\partial_t(v-\tilde v)(v-\tilde v)\dz+\mean{20Q}\langle a(x,t,Dv)-a(x,t,D\tilde v),Dv-D\tilde v\rangle\dz\\=\mean{20Q}\big[\langle a(x,t,D\psi),Dv-D\tilde v\rangle+\partial_t\psi(v-\tilde v)\big]\dz.
\end{multline*}
We call $I_2$ and $II_2$ the terms on the left-hand side (respectively, the parabolic and the elliptic one) and $III_2$ the term on the right-hand side. For the parabolic term we have
\[
I_2=\frac12\mean{20Q}\partial_t|v-\tilde v|^2\dz=\frac12\mean{20B}|v-\tilde v|^2(\cdot,\bar t)\dx\geq0;
\]
therefore we can discard it. Monotonicity formula \eqref{monotonicity.V} tells that we can bound
\[
II_2\geq \frac{1}{c_m(n,p,\nu)}\mean{20Q}\big|V_s(Dv)-V_s(D\tilde v)\big|^2\dz,
\]
while for the remaining one we estimate, similarly as in \eqref{anc.recall}-\eqref{anc.recall.2}, using Young's inequality twice, Poincar\'e's inequality and enlarging the domain of integration:
\begin{align*}
III_2&\leq  c(n,L)\mean{20Q}\biggl[ {\big(s^2+|D\psi|^2\big)}^{\frac{p-1}{2}}\big(|Dv|+|D\tilde v|\big)\\
&\hspace{4.5cm}+\varepsilon^{-p'}r_{\bar z}^{p'}|\partial_t\psi|^{p'}+\varepsilon^p\Bigl(\frac{|v-\tilde v|}{20r_{\bar z}}\Bigr)^p\biggr]\dz\\
&\leq  \varepsilon c(n,L)\mean{20Q}|D\tilde v|^p\dz+c(n,p,L)\mean{40Q} {\big(|Dv|+s\big)}^p\dz\\
&\hspace{2cm}+c(n,p,L,\varepsilon)\mean{20Q}\Bigl[|D\psi|^p+(2R)^{p'}|\partial_t\psi|^{p'}+s^p\Bigr]\dz\\
&\leq  \frac{1}{2c_mc_\ell}\mean{20Q}|D\tilde v|^p\dz+c(n,p,\nu,L)\,\lambda^p
\end{align*}
choosing $\varepsilon\equiv \varepsilon(n,p,\nu,L)$ small enough in the last line, taking into account \eqref{energy.Dv} and \eqref{intrinsic.big}, exactly as done to obtain \eqref{energy.Dv}. Hence appealing to Lemma \ref{fst.lemma} and again to \eqref{energy.Dv}, we have the following energy estimate for $D\tilde v$:
\begin{equation}\label{energy.Dtildev}
 \mean{20Q}\big(|D\tilde v|+s\big)^p\dz\leq c\,\lambda^p
\end{equation}
for a constant again depending on $n,p,\nu,L$. Now, working in a competely analogous way as done to deduce \eqref{f.compa.V}, estimating quite differently $III_2$, one can deduce the following
\begin{equation}\label{s.compa.V}
\mean{20Q}\big|V_s(Dv)-V_s(D\tilde v)\big|^2\dz\leq \frac{c}{M^{{p-1}}}\lambda^p;
\end{equation}
the (easy) details are left to the reader. Before passing to the next step, however, let us stress that due to \eqref{energy.Dtildev} we have that the higher-integrability estimate \eqref{hi.int.homo} reads here as follows:
\begin{equation}\label{hi.estimate.Dtildev}
 \mean{10Q}\big(|D\tilde v|+s\big)^{p(1+\varepsilon_0)}\dz\leq c\,\lambda^{p(1+\varepsilon_0)}.
\end{equation}

\vs

{\em Third comparison.} 
We finally come to the last comparison, and here we want to get rid of the $(x,t)$ dependence using \eqref{mod.osc} and comparing $\tilde v$ to another appropriate more regular solution $w$, having bounded gradient. We consider here the cylinder $10Q$ and for shortness we denote the averaged vector field
\[
\tilde a(t,\xi):=(a)_{10B}(t,\xi)=\mean{10B}a(\cdot,t,\xi)\dx,
\]
for any $t\in 10I$ and for all $\xi\in\R^n$, accordingly with \eqref{av.vf}. Now we define the solution
\[
w\in v+ L^p(10I;W^{1,p}_0(10B))\cap C^0(\overline{10I};L^2(10B))
\]
to the initial-lateral boundary value problem
\[
\begin{cases}
          \partial_t w -\divergence\tilde a(t,Dw)=0\qquad&\text{in $10Q$},\\[0.3cm]
          w=\tilde v&\text{on $\partial_\Ph (10Q)$}.
\end{cases}
\]
The usual procedure, already applied, after discarding the positive term gives
\begin{align}\label{start.Dw}
I_3+II_3&:=\mean{10Q}\langle \tilde a(t,D\tilde v)-\tilde a(t,Dw),D\tilde v-Dw\rangle\dz\\
&\quad+\mean{10Q}\langle a(x,t,D\tilde v)-\tilde a(t,D\tilde v),D\tilde v-Dw\rangle\dz \notag\\
&\quad\quad= \mean{10Q}\langle a(x,t,D\tilde v)-\tilde a(t,Dw),D\tilde v-Dw\rangle\dz\leq0.\notag
\end{align}
{\em Energy estimate for $Dw$.} We split, after using the growth condition \eqref{assumptions}$_2$
\begin{multline*}
-II_3\leq 2L\mean{10Q} {\big(s+|D\tilde v|\big)}^{p-1}|D\tilde v|\dz\\+2L\mean{10Q}{\big(s+|D\tilde v|\big)}^{p-1}|Dw|\dz; 
\end{multline*}
we estimate in the first term
\[
\mean{10Q} {\big(s+|D\tilde v|\big)}^{p-1}|D\tilde v|\dz\leq c(n)\mean{20Q} {\big(s+|D\tilde v|\big)}^p\dz\leq c\,\lambda^p,
\]
$c\equiv c(n,p,\nu,L)$, while for the second we estimate, using Young's inequality, $\varepsilon \in (0,1)$ to be fixed and \eqref{energy.Dtildev}
\begin{align}\label{reab.Dw}
\mean{10Q} {\big(s+|D\tilde v|\big)}^{p-1}&|Dw|\dz\notag\\
&\leq \varepsilon \mean{10Q}|Dw|^p\dz+c(p,\varepsilon)\mean{10Q} {\big(s+|D\tilde v|\big)}^p\dz\notag\\
&\leq  \varepsilon \mean{10Q}|Dw|^p\dz+c(n,p,\nu,L,\varepsilon)\lambda^p.
\end{align}
Therefore, first estimating $I_3$ from below with \eqref{monotonicity.V}, then using Lemma \ref{fst.lemma} as done in \eqref{ex.energy} (notice that \eqref{monotonicity.V} also, clearly, apply to $\tilde a(t,\xi)$) and finally re-absorbing the energy of $Dw$ appearing in \eqref{reab.Dw} we get
\begin{equation}\label{energy.Dw}
\mean{10Q}\big(s+|Dw|\big)^p\dz\leq c\,\lambda^p 
\end{equation}
for $c\equiv c(n,p,\nu,L)$. To complete this list of estimate, we come to the third comparison one; again, we want to take into account a smallness condition, that in this case will be  given by \eqref{mod.osc}. We start again from \eqref{start.Dw}. 

\vs

\noindent {\em Comparison estimate for $Dw$.}  Here we have to go trough a different path, since we need to encode a smallness condition in the estimate; on the other hand, we can use the just proved energy estimate \eqref{energy.Dw}. We call
\[
A_s(D\tilde v,10B):=\frac{|a(x,t,D\tilde v)-\tilde a(t,D\tilde v)|}{{(s+|D\tilde v|)}^{p-1}}.
\]
We now have
\begin{align*}
\mean{10Q}\big|V_s(D\tilde v)-&V_s(Dw)\big|^2\dz\\&\leq  \mean{10Q}\langle\tilde a(t,D\tilde v)-\tilde a(t,Dw),D\tilde v-Dw\rangle\dz\\
&\leq  \mean{10Q}\langle\tilde a(t,D\tilde v)- a(x,t,D\tilde v),D\tilde v-Dw\rangle\dz
\end{align*}
by \eqref{monotonicity.V} and \eqref{start.Dw}. Now we estimate, using H\"older's inequality with exponents $p,p'(1+\varepsilon_0),p'(1+\varepsilon_0)'$, where $\varepsilon_0$ is the exponent in \eqref{hi.estimate.Dtildev},
\begin{align*}
&\mean{10Q}\langle\tilde a(t,D\tilde v)- a(x,t,D\tilde v),D\tilde v-Dw\rangle\dz\\
&\leq \mean{10Q}A_s(D\tilde v,10B) {\big(s+|D\tilde v|\big)}^{p-1}|Dw-D\tilde v|\dz\\ 
&\leq \biggl(\mean{10Q}\big[|Dw|^p+|D\tilde v|^p\big]\dz\biggr)^\frac 1p\biggl(\mean{10Q} {(s+|D\tilde v|)}^{p(1+\varepsilon_0)}\dz\biggr)^{\frac{1}{p'(1+\varepsilon_0)}}\times\\
&\hspace{3.5cm}\times\biggl(\mean{10Q}\big[A_s(D\tilde v,10B) \big]^{p'(1+\varepsilon_0)'}\dz\biggr)^{\frac{\varepsilon_0}{p'(1+\varepsilon_0)}}.
\end{align*}
Now we bound the first averaged integral using \eqref{energy.Dtildev} and \eqref{energy.Dw}, the second with \eqref{hi.estimate.Dtildev}; hence, taking finally into account that $A_s(D\tilde v,10B)\leq2L$ and H\"older's inequality
\begin{align}\label{compa.Dw}
\mean{10Q}&\big|V_s(D\tilde v)-V_s(Dw)\big|^2\dz\notag\\
&\leq c\,\lambda^{1+\frac{p}{p'}}\biggl(\mean{10I}\Bigl(\mean{10B}\Bigl[\frac{|a(x,t,D\tilde v)-\tilde a(t,D\tilde v)|}{{(s+|D\tilde v|)}^{p-1}}\Bigr]^2\dx\Bigr)^{\frac12}\biggr)^{\frac{\varepsilon_0}{1+\varepsilon_0}}\notag\\
&\leq c\,\lambda^p\big[\omega_a(10r_z)\big]^{\bar\varepsilon};
\end{align}
$\bar\varepsilon\in(0,1)$ is an exponent depending on $\varepsilon_0$ and hence on $n,p,\nu,L$.

\subsection{Level-set estimates}\label{level.set}
Take a point $\bar z\in E(A\lambda,Q_{r_1})$, for $A\geq1$ to be chosen; hence $|Du(\bar z)|+s>A\lambda$ and in particular $\bar z\in E(\lambda,Q_{r_1})$. Therefore we can consider the cylinder $Q_{r_{\bar z}}^\lambda(\bar z)$ previously defined, where \eqref{exit.cylinder} and \eqref{intrinsic.big} hold. Define the comparison functions $v$, $\tilde v$ and $w$, respectively, over the cylinders $Q^\lambda_{40r_{\bar z}}(\bar z)$, $Q^\lambda_{20r_{\bar z}}(\bar z)$ and $Q^\lambda_{10r_{\bar z}}(\bar z)$, as in Paragraph \ref{comparion.par}. 

\vs

Observe now that $w$ is solution to a systems with just time-dependent coefficients and therefore $Dw$ turns out to be locally bounded in $Q^\lambda_{10r_{\bar z}}(\bar z)$, see Theorem \ref{sup.Thm}, and by estimate \eqref{energy.Dw} we have that
\begin{equation}\label{DiBenedetto}
	\sup_{Q^\lambda_{5r_{\bar z}}(\bar z)}|Dw|+s\leq c_{DiB}\lambda, 
\end{equation}
with $c_{DiB}$ just depending on $n,p,\nu,L$ but not on the cylinder, neither on $\lambda$. We shall use this to prove that
\begin{multline}\label{cases}
	\big(|Dw(z)|+s\big)^p \leq\big|V_s(D\tilde v(z))-V_s(Dw(z))\big|^2\\+\big|V_s(Dv(z))-V_s(D\tilde v(z))\big|^2+\big|V_s(Du(z))-V_s(Dv(z))\big|^2
\end{multline}
holds for any $z\in Q^\lambda_{5r_{\bar z}}(\bar z)\cap E(A\lambda,Q_{r_2})$, for an appropriate choice of $A$. Indeed using Lemma \ref{fst.lemma} three times, we infer the inequality
\begin{multline}\label{split2}
	\big(|Du(z)|+s\big)^p\leq c_\ell^3\,\big(s+|Dw(z)|\big)^p+c_\ell^3\,\big|V_s(D\tilde v(z))-V_s(Dw(z))\big|^2\\
	+c_\ell^2\,\big|V_s(Dv(z))-V_s(D\tilde v(z))\big|^2+c_\ell\,\big|V_s(Du(z))-V_s(Dv(z))\big|^2.
\end{multline}
Suppose now that \eqref{cases} fails: this, together with the latter inequality would yield
\[
\big(|Du(z)|+s\big)^p< 2c_\ell^3\big(s+|Dw(z)|\big)^p
\]
and then, also by \eqref{DiBenedetto} and the fact that $|Du(z)|+s>A\lambda$
\begin{align*}
\big(|Dw(z)|+s\big)^p\le c_{DiB}^p\,\lambda^p&<c_{DiB}^p\frac{(|Du(z)|+s)^p}{A^p}\\&< \frac{2c_\ell^3c_{DiB}^p}{A^p}\big(|Dw(z)|+s\big)^p,
\end{align*}
which is a contradiction for the choice 
\[
A\equiv A(n,p,\nu,L):=(2c_\ell^3)^{\frac1p}c_{DiB}\geq1. 
\]
Combining \eqref{cases} and \eqref{split2} we thus get
\begin{multline*}
	\big(|Du(z)|+s\big)^p\leq 2c_\ell^3\Bigl[\big|V_s(D\tilde v(z))-V_s(Dw(z))\big|^2\\
	+\big|V_s(Dv(z))-V_s(D\tilde v(z))\big|^2+\big|V_s(Du(z))-V_s(Dv(z))\big|^2\Bigr]
\end{multline*}
for all $z\in Q^\lambda_{5r_{\bar z}}(\bar z)\cap E(A\lambda,Q_{2R})$. Hence
\begin{align*}
&\big|\{z\in Q^\lambda_{5r_{\bar z}}(\bar z):\big(|Du(z)|+s\big)^p>A\lambda\}\big|\notag\\
	&\leq\big|\{z\in Q^\lambda_{5r_{\bar z}}(\bar z):\big|V_s(Du(z))-V_s(Dv(z))\big|^2>\frac{{(A\lambda)}^p}{8c_\ell^3}\}\big|\notag\\
	&\qquad+\big|\{z\in Q^\lambda_{5r_{\bar z}}(\bar z):\big|V_s(Dv(z))-V_s(D\tilde v(z))\big|^2>\frac{{(A\lambda)}^p}{8c_\ell^3}\}\big|\notag\\
	&\qquad\qquad+\big|\{z\in Q^\lambda_{5r_{\bar z}}(\bar z):|Dw(z)-D\tilde v(z)|^p>\frac{{(A\lambda)}^p}{8c_\ell^3}\}\big|.
\end{align*}
Now recalling that $A$ is fixed as a constant depending on $n,p,\nu,L$ and enlarging appropriately the domains of integration we get, using also the comparison estimates \eqref{f.compa.V}, \eqref{s.compa.V} and \eqref{compa.Dw} and finally the density estimate \eqref{discuss}
\begin{align}\label{int.estimate}
&\big|\{z\in Q^\lambda_{5r_{\bar z}}(\bar z):\big(|Du(z)|+s\big)^p>A\lambda\}\big|\notag\\
	&\leq \frac{c}{\lambda^p}\int_{Q^\lambda_{40r_{\bar z}}(\bar z)}\big|V_s(Du)-V_s(Dv)\big|^2 \dz\notag\\
	&\qquad+ \frac{c}{\lambda^p}\int_{Q^\lambda_{20r_{\bar z}}(\bar z)}\big|V_s(Dv)-V_s(D\tilde v)\big|^2 \dz\notag\\
	&\qquad\qquad+\frac{c}{\lambda^p}\int_{Q^\lambda_{10r_{\bar z}}(\bar z)}\big|V_s(D\tilde v)-V_s(Dw)\big|^2 \dz\notag\\
	&\qquad\leq c\, \Big[\frac{1}{M^{p-1}} +\big[\omega_a(10r_{\bar z})\big]^{\bar\varepsilon}\Big]\big|Q^\lambda_{r_{\bar z}}(\bar z)\big|\notag\\
	&\qquad\leq c\,G(2R,M)\biggl[\big|Q^\lambda_{r_{\bar z}}(\bar z)\cap E(\lambda/4,Q_{r_2})\big|\notag\\
	&\qquad+\frac{1}{(\varsigma\lambda)^{\eta}}\int_{\varsigma\lambda}^\infty\mu^{\eta}\big|\{z\in Q^\lambda_{r_{\bar z}}(\bar z):\Psi_{2R}(z)+s>\mu \}\big|\frac{d\mu}{\mu}\bigg],
\end{align}
where we denoted by $G(2R,M)$ the quantity $M^{1-p} +[\omega_a(2R)]^{\bar\varepsilon}$; the constant $c$ depends only on $n,p,\nu,L$. Note that we used clearly the monotonicity of $\rho\mapsto\omega_a(\rho)$ and the fact $10r_{\bar z}\leq 2R$.

\vs

Now consider the collection $\mathcal E_\lambda$ of cylinders $Q^\lambda_{r_{\bar z}}(\bar z)$, when $\bar z$ varies in $E(A\lambda,Q_{r_1})$. By a Vitali-type argument, we extract a countable sub-collection $\mathcal{F}_\lambda\subset\mathcal{E}_\lambda$ such that the $5$-times enlarged cylinders cover almost all $E(A\lambda,Q_{r_1})$ in the sense that if we denote the cylinders of $\mathcal{F}_\lambda$ by $Q_i^0:=Q^\lambda_{r_{\bar z_i}}(\bar z_i)$, for $i\in \mathcal{I}_\lambda$, being possibily $\mathcal{I}_\lambda=\N$, with their ``verteces'' $\bar z_i\in E(A\lambda,Q_R)$, we have
\[
E(A\lambda,Q_{r_1})\subset\bigcup_{i\in\mathcal{I}_\lambda} Q_i^1\cup\mathcal{N}_\lambda \qquad \text{with $|\mathcal{N}_\lambda|=0$}
\] 
and where we denoted $Q_i^1:=5Q_i^0=Q^\lambda_{5r_{\bar z_i}}(\bar z_i$. Moreover the cylinders are pairwise disjoints, i.e., $Q_i^0\cap Q_j^0=\emptyset$ whenever $i\neq j$. Using these two facts we can deduce an estimate for the measure of the level sets in the full $Q_{r_1}$ of $|Du|+s$: fix $\lambda>B\lambda_0$, take \eqref{int.estimate} over the cylinders of the covering $Q^\lambda_{5r_{\bar z}}=Q_i^1$ and sum over $\mathcal I_\lambda$: we get
\begin{multline}\label{est.first.par}
\big|E(A\lambda,Q_{r_1})\big|\leq c\,G(2R,M)\biggl[\big|E(\lambda/4,Q_{r_2})\big|\\
+\frac{1}{(\varsigma\lambda)^{\eta}}\int_{\varsigma\lambda}^\infty\mu^{\eta}\big|\{z\in Q_{r_2}:\Psi_{2R}(z)+s>\mu \}\big|\frac{d\mu}{\mu}\bigg].
\end{multline}
Finally we show how to refine the previous estimate in order to be allowed to reabsorb the Lorentz norm of $Du$ on the right-hand side. We define the truncations 
\begin{equation}\label{notation.k}
{|Du(z)|}_k:=\min\big\{|Du(z)|,k\big\}\qquad\text{for $z\in\Omega_T$ and $k\in\N\cap[B\lambda_0,\infty)$} 
\end{equation}
and note that by \eqref{est.first.par}, calling $E_k(\lambda,Q_\rho):=\{z\in Q_\rho:{|Du(z)|}_k+s>\lambda\}$, we have
\begin{multline}\label{notation.k.est}
\big|E_k(A\lambda,Q_{r_1})\big|\leq c\,G(2R,M)\biggl[\big| E_k(\lambda/4,Q_{r_2})\big|\\+\frac{1}{(\varsigma\lambda)^{\eta}}\int_{\varsigma\lambda}^\infty\mu^{\eta}\big|\{z\in Q_{r_2}:\Psi_{2R}(z)+s>\mu \}\big|\frac{d\mu}{\mu}\bigg],
\end{multline}
for $k\in\N\cap[B\lambda_0,\infty)$. Indeed in the case $k\le A\lambda$ we have $E_k(A\lambda,Q_{r_1})=\emptyset$ and therefore the previous estimate holds trivially. In the case $k>A\lambda$ on the other hand it follows since $E_k(A\lambda,Q_{r_1})= E(A\lambda,Q_{r_1})$ and
$ E_k(\lambda/4,Q_{2R}) =  E(\lambda/4,Q_{2R})$. 

\subsection{Conclusion, case $\boldsymbol{q<\infty}$}
Now the proof goes on exactly as in \cite{BL}, since the estimate we start from is very similar to \cite[Inequality $(5.14)$]{BL}; we sketch the details, referring to the aforementioned paper for more details. Multiply inequality \eqref{est.first.par} by $(A\lambda)^\gamma$ for $\gamma> p$, then raise both sides to the power $q/\gamma$ for $q<\infty$ and integrate with respect to the measure $d\lambda/(A\lambda)$ over $B\lambda_0$, since \eqref{est.first.par} holds true just for $\lambda$ varying in this range. This yields, recalling again that $A\geq1$ is a constant depending on $n,p,\nu,L$ and $\varsigma$ depends on $p,M$
\begin{align}\label{est.second.par}
&\int_{B\lambda_0}^\infty\Bigl((A\lambda)^\gamma\big|\{z\in Q_R:{|Du(z)|}_k+s>A\lambda\}\big|\Bigr)^{\frac q\gamma}\frac{d\lambda}{A\lambda}\notag\\
&\leq c\, [G(2R,M)]^{\frac q\gamma}\biggl[\int_0^\infty\Bigl(\lambda^\gamma\big|\{z\in Q_{r_2}:{|Du(z)|}_k+s>\lambda/4\}\big|\Bigr)^{\frac q\gamma}\frac{d\lambda}{\lambda}\notag\\
	&\ +c(p,\gamma,q,M)\times\notag\\&\qquad\times\int_0^\infty\lambda^{q(1-\frac {\eta}\gamma)}\biggl(\int_{\varsigma\lambda}^\infty\mu^{\eta}\big|\{z\in Q_{r_2}:\Psi_{2R}(z)+s>\mu \}\big|\frac{d\mu}{\mu}\biggr)^{\frac q\gamma}\frac{d\lambda}{\lambda}\bigg]\notag\\
	&=:c\, [G(2R,M)]^{\frac q\gamma}\big[I+II\big];
\end{align}
$c$ depends on $n,p,\nu,L,\gamma,q$. A  change of variable yields $I=c(q)\||Du|_k+s\|_{L(\gamma,q)(Q_{r_2})}^q$. For $II$ the situation is a bit more involved, and we have to consider separately two different cases. The first one is when $q\geq\gamma$; after changing again variable $\lambda\leftrightarrow\varsigma\lambda$, recalling the definition of $\varsigma$ in \eqref{choices}, and then we use Lemma \ref{Hardy} with $f(\mu)=\mu^{{\eta}-1}|\{z\in Q_{r_2}:\Psi_{2R}(z)+s>\mu \}|$, $\alpha=q/\gamma\geq1$ and $r=q(1-{\eta}/\gamma)>0$ to infer
\[
II\leq \frac{c}{(\gamma-p)^{q/\gamma}}\int_0^\infty\!\lambda^{q(1-\frac {\eta}\gamma)+{\eta}\frac q\gamma}\big|\{z\in Q_{r_2}:\Psi_{2R}(z)+s>\lambda \}\big|^{\frac q\gamma}\frac{d\lambda}{\lambda}
\]
and the latter integral is nothing else than $\|\Psi_{2R}+s\|_{L(\gamma,q)(Q_{r_2})}^q $; here $c\equiv c(p,\gamma,q,M)$ and note that \eqref{well.posed} is satisfied since $\Psi_{2R}\in L^{\eta}(Q_{r_2})$.

\vs

 In the case ${0<q<\gamma}$ we use Lemma \ref{Stein} with $r={\eta} q/\gamma$, $\alpha_1= 1< \gamma/q=\alpha_2$ and $h(\mu)={|\{z\in Q_{r_2}:\Psi_{2R}(z)+s>\mu\}|}^{\frac q\gamma}$:
\begin{multline*}
\biggl[\int_\lambda^\infty\mu^{\eta}\big|\{z\in Q_{r_2}:\Psi_{2R}(z)+s>\mu\}\big|\frac{d\mu}{\mu}\biggr]^{\frac q\gamma}\\
\leq  \lambda^{{\eta}\frac q\gamma } \big|\{z\in Q_{r_2}:\Psi_{2R}(z)+s>\lambda\}\big|^{\frac q\gamma}\\
+c\, \int_\lambda^\infty\mu^{{\eta}\frac q\gamma}\big|\{z\in Q_{r_2}:\Psi_{2R}(z)+s>\mu\}\big|^{\frac q\gamma}\frac{d\mu}{\mu}.
 \end{multline*}
Putting this estimate into the expression on the right-hand side of \eqref{est.second.par}, again after changing variable $\varsigma\lambda\leftrightarrow\lambda$
\begin{align*}
 II&\leq  c\,\|\Psi_{2R}+s\|_{L(\gamma,q)(Q_{r_2})}^q\\
&+c\,\int_0^\infty\lambda^{q(1-\frac {\eta}\gamma )}\biggl[\int_\lambda^\infty\mu^{{\eta}\frac q\gamma-1}\big|\{z\in Q_{r_2}:\Psi_{2R}(z)+s>\mu\}\big|^{\frac q\gamma}\,d\mu\biggr]\frac{d\lambda}{\lambda}\\
&\leq  \frac{c}{\gamma-p}\,\|\Psi_{2R}+s\|_{L(\gamma,q)(Q_{r_2})}^q,
\end{align*}
by Fubini's Theorem, $c\equiv c(p,\gamma,q,M)$. Therefore, all in all, putting all these estimates in \eqref{est.second.par}, after simple manipulations, we have that for all $\gamma>p$ and $0<q<\infty$, 
\begin{multline*}
\big\|{|Du|}_k+s\big\|_{L(\gamma,q)(Q_{r_1})}\leq c\,B\lambda_0|Q_R|^{\frac 1\gamma}\\+ \tilde c\, [G(2R,M)]^{\frac 1\gamma}\Bigl[\big\|{|Du|}_k+s\big\|_{L(\gamma,q)(Q_{r_2})}+c(M)\|\Psi_{2R}+s\|_{L(\gamma,q)(Q_{r_2})}\Bigr]
 \end{multline*}
with $\tilde c$ depending only on $n,p,\nu,L,\gamma,q$. At this point the reader might recall the definition of $G(2R,M)$ after \eqref{int.estimate}; we choose $M$ large enough and $R_0$ small enough to have
\[
\frac{\tilde c}{M^{p-1}}\leq \frac{1}{2^\gamma},\qquad\qquad\tilde c\big[\omega_a(2R_0)\big]^{\bar\varepsilon}\leq \frac{1}{2^\gamma}
\]
and this, taking into consideration the dependencies of $\tilde c$ and $\bar\varepsilon$, yields the dependencies for $R_0$ stated in Theorem \ref{thm.main}. Now we also have that $M$ is a fixed constant depending on $n,p,\nu,L,\gamma,q$; recall that $\bar\varepsilon$ has been defined after \eqref{compa.Dw}.

With these choices and taking into account that $d\geq1$, we have
\begin{multline}\label{pov.us}
\big\|{|Du|}_k+s\big\|_{L(\gamma,q)(Q_{r_1})}\leq \frac12\big\|{|Du|}_k+s\big\|_{L(\gamma,q)(Q_{r_2})}\\+c\|\Psi_{2R}+1\|_{L(\gamma,q)(Q_{r_2})}^d
+c\,|Q_{2R}|^{\frac 1\gamma}\Big(\frac{R}{r_2-r_1}\Big)^{\frac Npd}\times\\\times\biggl[{\biggl(\mean{Q_{2R}}\big(|Du|+s\big)^p\dz\biggr)}^{\frac dp}+{\biggl(\mean{Q_{2R}}(\Psi_{2R}+1)^{\eta}\dz\biggr)}^{\frac d{\eta}}\biggr]
 \end{multline}
all the constants depending on $n,p,\nu,L,\gamma,q$. At this point Lemma \ref{it.lemma} allows to re-absorb the $L(\gamma,q)$ norm of $Du$ in the right-hand side: $\|{|Du|}_k+s\|_{L(\gamma,q)(Q_{2R})}$ is clearly finite. Moreover first using the H\"older's inequality in Marcinkiewicz spaces Lemma \ref{lab.Marci}, then using \eqref{Bram} with $\lambda=0$ to get the Lorentz norm of $\Psi_{2R}+s$ from the Marcinkiewicz one (see \cite[$(5.19)$--$(5.20)$]{BL} for the missing details), we get
\[
 \biggl(\mean{Q_{2R}}\big(\Psi_{2R}+1\big)^{\eta}\dz\biggr)^{\frac 1{\eta}}\leq \frac{c(p,\gamma,q)}{(\gamma-p)^{1/{\eta}}}|Q_{2R}|^{-\frac 1\gamma}\|\Psi_{2R}+1\|_{L(\gamma,q)(Q_{2R})}.
\]
Putting all these informations in \eqref{pov.us} yields
\begin{multline*}
	\big\|{|Du|}_k+s\big\|_{L(\gamma,q)(Q_R)}\leq c\,|Q_R|^{\frac 1\gamma}\biggl[{\biggl(\mean{Q_{2R}}\big(|Du|+s\big)^p\dz\biggr)}^{\frac dp}\\+|Q_{2R}|^{-\frac d\gamma}{\|\Psi_{2R}+1\|}_{L(\gamma,q)(Q_{2R})}^d\biggr];
\end{multline*}
finally dividing by $|Q_R|^\frac1\gamma$, taking the limit $k\to\infty$ and using Fatou's Lemma together with Remark \ref{lsc} yields \eqref{main.est} for $q<\infty$.

\subsection{Conclusion, case  $\boldsymbol{q=\infty}$}
We have to come back to the second alternative in \eqref{split}. This time we split, for $\tau$ small to be chosen
\begin{align*}
\Bigl(\frac{\lambda}{2}\Bigr)^{\eta}&\leq M^{\eta}\mean{Q}(\Psi_{2R}+s)^{\eta}\dz\\&\leq M^{{\eta}}(\tau\lambda)^{\eta}+\frac{M^{{\eta}}}{|Q|}\int_{\Psi_{2R}(\tau\lambda,Q)}(\Psi_{2R}+s)^{\eta}\dz,
\end{align*}
calling in short $\Psi_{2R}(\tau\lambda,Q)$ the set $\{z\in Q:\Psi_{2R}(z)+s>\tau\lambda\}$. Hence, using again H\"older's inequality for Marcinkiewicz spaces, Lemma \ref{lab.Marci}, we have with  $\Psi_{2R}(\mu,Q):=\{z\in Q:\Psi_{2R}(z)+s>\mu\}$
\begin{align*}
\Bigl(\frac{\lambda}{2}\Bigr)^{\eta}-M^{\eta}(\tau\lambda)^{\eta}&\leq\frac{M^{\eta}}{|Q|}\int_{\Psi_{2R}(\tau\lambda,Q)}(\Psi_{2R}+s)^{\eta}\dz\\
&\leq \frac{\gamma M^{\eta}}{\gamma-{\eta}}\frac{|\Psi_{2R}(\tau\lambda,Q)|^{1-\frac {\eta}\gamma}}{|Q|}\times\\
&\quad\times\sup_{\mu>0}\mu^{\eta}\big|\{z\in \Psi_{2R}(\tau\lambda,Q):|\Psi_{2R}(z)|+s>\mu\}\big|^{\frac {\eta}\gamma}\\
&\leq \frac{\gamma M^{\eta}}{\gamma-{\eta}}\biggl[\frac{|\Psi_{2R}(\tau\lambda,Q)|}{|Q|}(\tau\lambda)^{\eta}\\
&\qquad+ \frac{|\Psi_{2R}(\tau\lambda,Q)|^{1-\frac {\eta}\gamma}}{|Q|}\sup_{\mu>\tau\lambda}\mu^{\eta}\big|\Psi_{2R}(\mu,Q)\big|^{\frac {\eta}\gamma}\biggr].
\end{align*}
Again we have been quite sloppy: we refer to \cite[Paragraph 5.4]{BL}. Choosing $\tau$ appropriate:
\[
\frac{1}{2^{\eta}}-M^{\eta}\tau^{\eta}\frac{2\gamma-{\eta}}{\gamma-{\eta}}\geq\frac{1}{4^{\eta}},\qquad\text{i.e.}\qquad \tau=\frac{c(n,p,\nu,L,\gamma)}{M},
\]
we have
\begin{align*}
|Q|&\leq   c\,\frac{|\Psi_{2R}(\tau\lambda,Q)|^{1-\frac {\eta}\gamma}}{(\tau\lambda)^{\eta}}\Big[\sup_{\mu>\tau\lambda}\mu^\gamma\big|\Psi_{2R}(\mu,Q)\big|\Big]^{\frac {\eta}\gamma}\\&\leq c\,(\tau\lambda)^{-\gamma}\sup_{\mu\geq\tau\lambda}\mu^\gamma\big|\Psi_{2R}(\mu,Q)\big|.
\end{align*}
Now we match the previous estimate, which follows if we suppose \eqref{split}$_2$, together with \eqref{meas.1}, which follows from \eqref{split}$_1$ without changes with respect to the case $q<\infty$, we estimate as in \eqref{int.estimate} and then we sum as in Paragraph \ref{level.set}; we get hence
\begin{multline*}
\big|E(A\lambda,Q_{r_1})\big|\leq c\, G(2R,M)\Bigl[\big|E(\lambda/4,Q_{r_2})\big|\\+(\tau\lambda)^{-\gamma}\sup_{\mu\geq\tau\lambda}\mu^\gamma\big|\Psi_{2R}(\mu,Q_{r_2})\big|\Big]. 
\end{multline*}
and also, with the notation introduced after \eqref{notation.k},
\begin{multline*}
 \big|E_k(A\lambda,Q_{r_1})\big|\\\leq c\, G(2R,M)\Bigl[\big|E_k(\lambda/4,Q_{r_2})\big|+(\tau\lambda)^{-\gamma}\sup_{\mu\geq\tau\lambda}\mu^\gamma\big|\Psi_{2R}(\mu,Q_{r_2})\big|\Big].
\end{multline*}
We now multiply inequality \eqref{est.first.par} by $(A\lambda)^\gamma$ and  we take the supremum with respect to $\lambda$ over $(B\lambda_0,\infty)$; this gives, after changing variable
\begin{align*}
\sup_{\lambda>B\lambda_0}(A\lambda)^\gamma&\big|\{z\in Q_{r_1}:{|Du(z)|}_k+s>A\lambda\}\big|\\
&\leq\tilde c\,G(2R,M)\Bigl[\sup_{\lambda>B\lambda_0}\lambda^\gamma\big|\{z\in Q_{r_2}:{|Du(z)|}_k+s>\lambda/4\}\big|\\
&\hspace{3cm}+c(M)\sup_{\lambda>B\tau\lambda_0}\sup_{\mu\geq\lambda}\mu^\gamma\big|\Psi_{2R}(\mu,Q_{r_2})\big|\Bigr].
\end{align*}
Now some easy algebraic manipulations, see again \cite[Paragraph 5.4]{BL}, yield 
\begin{multline*}
\big\|{|Du|}_k+s\big\|_{\mathcal M^{\gamma}(Q_{r_1})}\leq \frac12 \big\|{|Du|}_k+s\big\|_{{\mathcal M}^{\gamma}(Q_{r_2})}+c\, \|\Psi_{2R}+s\|_{\mathcal M^{\gamma}(Q_{r_2})}\\
+c\,R^{\frac N\gamma}\Big(\frac{R}{r_2-r_1}\Big)^{\frac Npd}\bigg[\Bigl(\mean{Q_{r_2}} \big(|Du|+s\big)^p\dz\Bigr)^{\frac dp}+\Bigl(\mean{Q_{r_2}}(\Psi_{2R}+1)^{\eta} \dz \Bigr)^{\frac d{\eta}}\biggr]
\end{multline*}
after choosing $M$ large and $R_0$ small enough to ensure that $G(2R,M)\leq 1/(2^\gamma\tilde c)$ for all $R\leq R_0$. Using one more time Lemma \ref{lab.Marci} we get
\[ 
\biggl(\mean{Q_{2R}}\big(\Psi_{2R}+1\big)^{\eta}\dz\biggr)^{\frac 1{\eta}}\leq \frac{c(p,\gamma)}{(\gamma-p)^{1/{\eta}}}|Q_{2R}|^{-\frac 1\gamma}\|\Psi_{2R}+1\|_{\mathcal M^\gamma(Q_{2R})}
\]
and this finally leads to \eqref{main.est} in the case $q=\infty$.

\subsection*{Acknowledgements}
This research has been supported by the ERC grant 207573 ``Vectorial Problems". The paper was completed while the author was attending the program ``Evolutionary problems'' at the Institut Mittag-Leffler (Djursholm, Sweden) in the Fall 2013; the hospitality of the Institut is gratefully acknowledged.

\end{document}